\theoremstyle{remark}
\newtheorem{remark}{Remark}[section]
\theoremstyle{plain}
\newtheorem{theorem}[remark]{Theorem}
\newtheorem{proposition}[remark]{Proposition}
\newtheorem{lemma}[remark]{Lemma}
\newtheorem{corollary}[remark]{Corollary}
\newenvironment{grazie}{\textsc{Acknowledgements.} \upshape}{\hfill\par}
\begin{document}
    \title{\rmfamily\normalfont\spacedallcaps{Increasing variational solutions for a nonlinear $p$-laplace equation without growth conditions}\thanks{MSC2010 classification: 35J62, 35J70, 35J75}}
    \author{\spacedlowsmallcaps{simone secchi}\thanks{Dipartimento di Matematica ed Applicazioni, Universit\`a di Milano--Bicocca, via R.~Cozzi 53, edificio U5, I-20125 Milano (Italy). E-mail: \texttt{simone.secchi@unimib.it}}}

\date{} 
        
    \maketitle
    
    \begin{abstract}
        By means of a recent variational technique, we prove the existence of radially monotone solutions to a class of nonlinear problems involving the $p$-Laplace operator. No subcriticality condition (in the sense of Sobolev spaces) is required.
    \end{abstract}

\section{Introduction}

Variational methods are a powerful tool for solving partial differential equations. Not only do they provide existence theorems, but they can often provide solutions with additional properties. For instance, by Palais' Principle of Symmetric Criticality (see \cite{pal}) it is well known that radial symmetry is a \emph{natural constraint} in Critical Point Theory. Roughly speaking, if a variational equation is invariant under rotation, then we can look for radially symmetric weak solutions (\ie\ weak solutions found as critical points of a suitable functional) by simply restricting the variational framework to the smaller subspace of radial functions. 

Once we have a radial solution, a natural question is whether this must be a monotone function. Although many tools for proving the monotonicity of radial solutions are available (the Gidas--Ni--Nirenberg  theory \cite{gnn}, or the use of some kind of symmetrization), this is not a natural constraint.

  In a recent paper \cite{St},  Serra \emph{et al.} introduced a new and interesting variational scheme to find increasing positive and radial  solutions to a semilinear elliptic equation with Neumann boundary conditions on a ball in $\mathbb{R}^N$. More precisely, they studied the problem
  \[
        \begin{cases}
        -\Delta u + u =  a(|x|)f(u) &\text{in $B_R$} \\
        u>0 &\text{in $B$}\\
        \frac{\partial u}{\partial \nu} = 0 &\text{on $\partial B_R$},
\end{cases}
        \]
where $a \colon [0,+\infty)  \to \mathbb{R}$ and $f \colon \mathbb{R} \to \mathbb{R}$ are regular functions which satisfy very mild assumptions. In particular, no growth condition like \(f(u) \leq  C u^q\) with a subcritical exponent \(q<(N+2)/(N-2)\) is required. Semilinear problems like this appear in some applications: we refer to \cite{bss,l1,lnt,ay1,ay2} and to the references therein. In some situations, the structure of the equation or numerical experiments suggest the existence of \emph{increasing} radial solutions: this happens for the H\'{e}non equation with Neumann boundary conditions, where $a(x)=|x|^\alpha$ for some $\alpha>0$. It is difficult to embed this monotonicity in the variational setting, and yet these solutions do exist, as proved in \cite{bss} by means of the shooting method. The variational approach developed in \cite{St} allows us to deal with these solutions in a clean and elementary way.

\bigskip

The purpose of our note is to find radially increasing solutions to a class of quasilinear equations involving the $p$-Laplace operator.

In the first part we will deal with the nonlinear eigenvalue problem for the \(p\)--Laplace operator
\begin{equation} \label{1}
        \begin{cases}
        -\Delta_p u + |u|^{p-2}u = \lambda a(|x|)f(u) &\text{in $B$} \\
        u>0 &\text{in $B$}\\
        \frac{\partial u}{\partial \nu} = 0 &\text{on $\partial B$},
\end{cases}
\end{equation}
where $B=\{x \in \mathbb{R}^N  \colon |x| <1 \}$ and $N \geq 3$.
Here we want $\lambda \in \mathbb{R}$ and a positive $u \in W^{1,p}(B)$ such that (\ref{1}) is solved in the usual weak sense. We will borrow some ideas from \cite{St} to solve a constrained optimization problem.

In the second part, we will look for solutions of (\ref{1}) when $\lambda$ is \emph{fixed}, \ie\ it is not an unknown.

\begin{remark}
Since $a$ is non--constant and we do not suppose that $f$ is a homogeneous function, the nonlinear eigenvalue problem is not equivalent to the problem with a fixed $\lambda$.
\end{remark}

\noindent\textbf{Notation}

\begin{itemize}
\item The $p$--Laplace operator is formally defined by $\Delta_p u = \operatorname{div}(|\nabla u|^{p-2}\nabla u)$. We will always assume that $1<p<\infty$.
\item $W^{1,p}(B)$ is the usual Sobolev space endowed with the norm
  $\|u\| = \left( \int_B |\nabla u|^p + |u|^p \right)^{1/p}$. If $u\in
  W^{1,p}(B)$, its positive part $u^{+} = \max\{u,0\}$ belongs to
  $W^{1,p}(B)$.
\item $\nu$ is the outer normal vector.
\item By a useful abuse of notation, we will often identify a radial
  function in $\mathbb{R}^N$ with its radial representative. If $u$ is
  a radial function in $\mathbb{R}^N$, we will often write
  $u(x)=u(|x|)=u(r)$, where $r=|x|$. 
\item In some formul\ae, we will often write $|u|^p$ instead of $u^p$ even when $u\geq 0$. This choice preserves some symmetry in the equations.
\item The notation $|S^{N-1}|$ is used for the Lebesgue measure of the sphere $S^{N-1}$.
\item The abbreviation ``a.e.'' stands for ``almost everywhere''.
\end{itemize}

\section{The nonlinear eigenvalue problem}

In the rest of this section, we will retain some assumptions on $a$
and $f$ in (\ref{1}):
\begin{description}
\item[(A)] $a \in L^1(B)$ is a non-costant radially increasing function,
  satisfying $a(r)>0$ for almost every $r \in [0,1]$;
\item[(F)] $f\in C^1([0,+\infty))$ is a positive function that
  satisfies $f(0)=0$; moreover $t \mapsto f(t)/t^{p-1}$ is strictly
  increasing on $(0,+\infty)$.
\end{description}

\begin{remark}
  Condition~(F) implies that $f$ is strictly increasing. Being
  positive away from $t=0$, the primitive $F(t)=\int_0^r f(s)\, ds$ is
  strictly increasing as well.
\end{remark}

\begin{remark}
By inspecting the rest of this paper, one will notice that the integrability assumption on $a$ could be slightly weakened. We required in (A) that $\int_B a(|x|)\, dx = |S^{N-1}| \int_0^1 a(r) r^{N-1}\, dr < \infty$, but it suffices to assume that $\int_0^1 a(r)\, dr < \infty$ as in \cite{St}.
\end{remark}

We state our main result.

\begin{theorem} \label{main} 
The nonlinear eigenvalue problem
  (\ref{1}) has at least one radially symmetric solution, and this
  solution is a monotone increasing function.
\end{theorem}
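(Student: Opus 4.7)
The plan is to adapt the variational scheme of \cite{St} and realise the solution as a constrained minimiser over the convex cone of radial, non-negative, non-decreasing $W^{1,p}$ functions. Concretely, set
\[ \mathcal{C} = \left\{ u \in W^{1,p}(B) \colon u \text{ radial}, \ u \geq 0, \ r \mapsto u(r) \text{ non-decreasing on } [0,1] \right\}, \]
fix $c>0$, and minimise $J(u) = \frac{1}{p}\|u\|^p$ on $\mathcal{M} = \{u \in \mathcal{C} \colon \int_B a(|x|) F(u)\,dx = c\}$. A minimiser, once shown to satisfy the Euler--Lagrange identity of the constrained problem, will provide the desired radially increasing solution of (\ref{1}), with the Lagrange multiplier playing the role of $\lambda$.

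Existence is obtained via the direct method. A minimising sequence $\{u_n\}$ is bounded in $W^{1,p}(B)$, hence (up to a subsequence) $u_n \rightharpoonup u^\star$ weakly; the set $\mathcal{C}$ is convex and strongly closed, thus weakly closed, so $u^\star \in \mathcal{C}$, and weak lower semicontinuity takes care of $J$. The key point for passing to the limit in the constraint is that every $u \in \mathcal{C}$ is automatically bounded in $L^\infty(B)$: the trace inequality yields $u(1) \leq C\|u\|$, and by monotonicity $0 \leq u(r) \leq u(1)$ for every $r \in [0,1]$. Hence the sequence takes values in a fixed compact interval; combining (A), pointwise a.e.\ convergence (obtained from the compact radial Sobolev embedding into $L^p$) and dominated convergence gives $\int_B a(|x|) F(u_n)\,dx \to \int_B a(|x|) F(u^\star)\,dx$, so $u^\star \in \mathcal{M}$. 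This is exactly the step that dispenses with any growth condition on $f$: only continuity of $f$ is used, since all competitors take values in a fixed compact set.

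The main obstacle is to upgrade the minimum property from a variational inequality on the convex set $\mathcal{M}$ to the true Euler--Lagrange equation. The standard Lagrange multiplier argument only yields
\[ \langle J'(u^\star), v - u^\star \rangle \geq \lambda \int_B a(|x|) f(u^\star)(v - u^\star)\,dx \qquad \text{for every } v \in \mathcal{C} \cap \mathcal{M}, \]
with $\lambda \in \mathbb{R}$. To turn this into an equality against every radial test function $\phi \in W^{1,p}(B)$, I follow the strategy of \cite{St}: for each radial $\phi$ and small $t>0$, I construct an admissible competitor $v_t \in \mathcal{C}$ that agrees with $u^\star + t\phi$ outside a small set $E_t$ where monotonicity would be spoiled, and on $E_t$ replaces $u^\star + t\phi$ by (essentially) its non-decreasing envelope. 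The crucial claim is that this correction is $o(t)$ in $W^{1,p}(B)$, so letting $t \to 0^+$ and then exchanging $\phi$ with $-\phi$ produces the identity
\[ \int_B \left( |\nabla u^\star|^{p-2}\nabla u^\star \cdot \nabla \phi + |u^\star|^{p-2}u^\star \phi \right) dx = \lambda \int_B a(|x|) f(u^\star)\,\phi\,dx \]
for all radial $\phi \in W^{1,p}(B)$. The monotonicity of $a$ built into (A) is what guarantees that the correction does not contribute to the constraint at first order. Palais' Principle of Symmetric Criticality then removes the radiality restriction.

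Finally, testing the identity with $u^\star$ itself and invoking (F) together with $u^\star \not\equiv 0$ yields $\lambda > 0$. The strong maximum principle for the $p$-Laplacian, applied to $-\Delta_p u^\star + |u^\star|^{p-2}u^\star = \lambda a(|x|) f(u^\star) \geq 0$, then gives $u^\star > 0$ throughout $B$. Monotonicity of $u^\star$ in $r$ is built in by construction, so $u^\star$ is the sought radially increasing solution of (\ref{1}).
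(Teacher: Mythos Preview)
Your variational setup (minimising the norm on a level set of $I(u)=\int_B aF(u)$, rather than maximising $I$ on the unit sphere as the paper does) is an equivalent dual formulation, and your compactness/existence argument is essentially the paper's. The real divergence is in how you pass from the variational inequality on the cone to the Euler--Lagrange \emph{equality}, and there your argument has a genuine gap.

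You assert that for each radial $\phi$ and small $t>0$ one can replace $u^\star+t\phi$ by its non-decreasing envelope $v_t$, and that the correction $v_t-(u^\star+t\phi)$ is $o(t)$ in $W^{1,p}$. This is not justified, and in general it is false unless you already know that $u^\star$ is \emph{strictly} increasing a.e. If $(u^\star)'=0$ on an interval $[\alpha,\beta]$ of positive length and $\phi'<0$ there, then $u^\star+t\phi$ is strictly decreasing on $[\alpha,\beta]$ and the monotone envelope must erase a slope of order $t$ over a fixed interval; the resulting correction is $O(t)$, not $o(t)$, in $W^{1,p}$. So the limiting identity you claim does not follow. Your remark that ``the monotonicity of $a$ built into (A) is what guarantees that the correction does not contribute to the constraint at first order'' misplaces the role of (A): monotonicity of $a$ does nothing to control the $W^{1,p}$-size of the envelope correction.

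The paper handles exactly this obstruction, and the route is different from yours. Starting only from the one-sided inequality (your variational inequality), it first proves $u>0$ on $\overline B$ by a barrier/comparison argument, and then---this is the key step you are missing---shows $(u^\star)'(r)>0$ for a.e.\ $r$ by testing the inequality against an explicit one-parameter family of piecewise-linear radial functions and using \emph{both} that $a$ is non-constant and that $t\mapsto f(t)/t^{p-1}$ is strictly increasing. Only once $(u^\star)'>0$ a.e.\ is established does the paper approximate an arbitrary radial $v\in C^1$ by admissible perturbations (setting $v_k'=v'\chi_{\{(u^\star)'>1/k\}}$), for which $u^\star+\varepsilon v_k\in\mathcal{C}$ is now clear; dominated convergence then yields the full Euler--Lagrange identity. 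In short: the strict a.e.\ monotonicity of the extremiser is not a free by-product of the construction but a separate lemma that must be proved first, and it is precisely where the structural hypotheses (A) and (F) enter. Your outline needs this step (or a genuine substitute) before the envelope argument can be salvaged.
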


In the particular case of the pure--power nonlinearity $f(u)=u^q$, our existence theorem reads as follows.
\begin{corollary}
For any $q>1$, the nonlinear eigenvalue problem
\begin{equation*} 
        \begin{cases}
        -\Delta_p u + |u|^{p-2}u = \lambda a(|x|)|u|^q &\text{in $B$} \\
        u>0 &\text{in $B$}\\
        \frac{\partial u}{\partial \nu} = 0 &\text{on $\partial B$}
\end{cases}
\end{equation*}
has at least one radially symmetric solution, and this solution is a monotone increasing function.
\end{corollary}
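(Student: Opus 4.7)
The plan is to derive the corollary as a direct specialization of Theorem \ref{main}. Since the assumption (A) on the weight $a$ is identical in both statements, the only thing left to check is that the pure-power nonlinearity $f(u)=u^q$ meets hypothesis (F); once this is done, Theorem \ref{main} is invoked with this particular $f$, and the existence of a radially symmetric, monotone increasing solution follows automatically.

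I would verify (F) item by item. First, the regularity: for $q>1$ the derivative $f'(u)=q u^{q-1}$ extends continuously to $u=0$, so $f\in C^1([0,+\infty))$. Second, the sign and normalization: evidently $f(0)=0$ and $f(t)>0$ for every $t>0$. Third, the monotonicity of the ratio: one computes
\[
\frac{f(t)}{t^{p-1}}=t^{q-(p-1)},
\]
which is a strictly increasing function of $t$ on $(0,+\infty)$, since the exponent $q-(p-1)$ is positive in the regime under consideration. All three items of (F) are therefore in force.

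Regarding the appearance of $|u|^{q}$ rather than $u^{q}$ in the statement of the corollary: this is purely cosmetic. Theorem \ref{main} produces a positive weak solution $u\in W^{1,p}(B)$, and on such a solution $|u|^{q}=u^{q}$; this also matches the typographical convention spelled out in the Notation section. No additional work is required, so there is no real obstacle in the proof: the whole argument reduces to the algebraic check that $t\mapsto t^{q-(p-1)}$ is strictly increasing, followed by an appeal to the main theorem.
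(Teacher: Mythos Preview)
Your approach is exactly the paper's: the corollary is presented immediately after Theorem~\ref{main} as the special case $f(u)=u^q$, with no separate argument given, so verifying (F) and invoking the theorem is all there is to it.

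There is, however, one point you sweep under the rug. You assert that $t\mapsto t^{q-(p-1)}$ is strictly increasing ``since the exponent $q-(p-1)$ is positive in the regime under consideration.'' But the corollary, as stated, only assumes $q>1$, while the paper allows any $1<p<\infty$. If $p>2$ then $p-1>1$, and $q>1$ does \emph{not} force $q>p-1$; for instance $p=4$, $q=2$ gives $q-(p-1)=-1$, so $f(t)/t^{p-1}=t^{-1}$ is strictly \emph{decreasing} and (F) fails. The hypothesis actually needed for the argument to go through is $q>p-1$. The paper's own statement carries the same imprecision, but your write-up should name the required inequality explicitly rather than hide it behind the phrase ``the regime under consideration.''
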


\subsection{Introducing a variational problem}

Since we are looking for radially symmetric increasing solutions, we define the set
\begin{equation*}
  \mathcal{M} = \left\{ u \in W^{1,p}(B) \mid \text{$u$ is a radially increasing function, $u \geq 0$ a.e.} \right\}.
\end{equation*}
More explicitly, the elements of $\mathcal{M}$ are those functions $u$ from $W^{1,p}(B)$ that are invariant under any rotation in $\mathbb{R}^N$ and $u(|x|) \leq u(|y|)$ whenever $|x| < |y|$.

A reasonable attempt to solve the nonlinear eigenvalue problem (\ref{1}) is to find a solution for the variational problem
\begin{equation} \label{2} S = \sup \left\{ \int_B a(|x|) F(u) \, dx
    \mid \text{$u \in \mathcal{M}$, $\|u\|^p=1$} \right\},
\end{equation}
where $F(r) = \int_0^r f(s) \, ds$. If $\mathcal{M}$ were a smooth submanifold of codimension one, then we could refer to the classical theory of Lagrange multipliers, and conclude.  Since this is not the case, in section \ref{sec:3} we will prove directly that any solution of (\ref{2}) gives rise to a solution of (\ref{1}).

Turning back to the definition of the set $\mathcal{M}$, we notice that any $u \in \mathcal{M}$ is (identified with) a continuous function on $[0,1]$; indeed, by monotonicity, we can set $u(0)=\lim_{r \to 0+}u(r)$.

\begin{remark}
  The set $\mathcal{M}$ is indeed a (closed) cone in $W^{1,p}(B)$. It
  is tempting to solve (\ref{1}) by applying some Critical Point
  Theory on metric spaces to the free functional
\[
u \mapsto \frac{1}{p} \int_B (|\nabla u|^p + |u|^p ) - \int_B a(|x|)
F(u) \, dx.
\]
However, we are unable to pursue this idea further.
\end{remark}

The main advantage of working with $\mathcal{M}$ is that it consists
of bounded functions.

\begin{proposition}\label{prop:infinity}
  There exists a positive constant $C_{\mathcal{M}}$ such that $\|u\|_{L^\infty} \leq C
  \|u\|$ for any $u \in \mathcal{M}$. In particular, strong convergence in $\mathcal{M}$ implies
  uniform convergence.
  \end{proposition}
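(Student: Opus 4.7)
}
Since any $u\in\mathcal{M}$ has a continuous monotone representative on $[0,1]$, the quantity $\|u\|_{L^\infty}=u(1)$ equals the value at the boundary. The plan is to bound $u(1)$ by first bounding $u$ at some interior radius $r_0$ (using monotonicity together with the $L^p$ norm) and then controlling the increase from $r_0$ to $1$ via the gradient.

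First I would fix $r_0\in(0,1)$, say $r_0=1/2$. Because $u$ is radially increasing, $u(x)\geq u(r_0)$ for every $|x|\geq r_0$, hence
\[
u(r_0)^p\,|S^{N-1}|\int_{r_0}^1 s^{N-1}\,ds\;\le\;\int_{r_0\le|x|\le 1}|u|^p\,dx\;\le\;\|u\|_{L^p(B)}^p.
\]
This yields a constant $C_1$, depending only on $N$ and $r_0$, with $u(r_0)\le C_1\|u\|$.

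Next, the radial representative is absolutely continuous on $[r_0,1]$, so
\[
u(1)=u(r_0)+\int_{r_0}^1 u'(s)\,ds.
\]
Writing $u'(s)=\bigl(u'(s)\,s^{(N-1)/p}\bigr)\cdot s^{-(N-1)/p}$ and applying H\"older's inequality gives
\[
\int_{r_0}^1 u'(s)\,ds\;\le\;\left(\int_{r_0}^1|u'(s)|^p\,s^{N-1}\,ds\right)^{1/p}\left(\int_{r_0}^1 s^{-(N-1)/(p-1)}\,ds\right)^{(p-1)/p}.
\]
The first factor is $|S^{N-1}|^{-1/p}\|\nabla u\|_{L^p(B)}$, and the second is finite \emph{because we are away from the origin}: on $[r_0,1]$ the weight $s^{-(N-1)/(p-1)}$ is bounded by $r_0^{-(N-1)/(p-1)}$, so the integral is at most $(1-r_0)\,r_0^{-(N-1)/(p-1)}$. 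No assumption relating $p$ and $N$ is needed.

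Combining the two bounds yields $u(1)\le C\|u\|$ with $C=C(N,p)$, which is the required estimate. The main (very minor) obstacle is exactly the observation that although $W^{1,p}$ does not embed into $L^\infty$ in general when $p\le N$, the radial monotonicity localises the problem to the annulus $\{r_0\le|x|\le 1\}$, where the weight $s^{N-1}$ is harmless and the one-dimensional Sobolev embedding is available. The uniform convergence statement then follows because $\mathcal{M}$ is closed under $W^{1,p}$-limits (pointwise limits of increasing functions are increasing) and the linear estimate above applies to differences of elements of the cone $\mathcal{M}-\mathcal{M}$ as well; alternatively, one applies it directly to $u_n-u$ after noting that $u_n\to u$ uniformly on $[r_0,1]$ by Arzel\`a--Ascoli.
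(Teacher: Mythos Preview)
Your argument for the main inequality $\|u\|_{L^\infty}\le C\|u\|$ is correct and is essentially the paper's proof made explicit: the paper invokes a radial lemma on the annulus $B\setminus\overline{B(0,r_0)}$ and uses monotonicity to identify $\|u\|_{L^\infty(B)}$ with the maximum on that annulus, while you carry out the one-dimensional Sobolev estimate on $[r_0,1]$ by hand. The two approaches are the same in spirit.

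The last paragraph, however, contains a genuine gap. The estimate does \emph{not} extend to $\mathcal{M}-\mathcal{M}$: Step~1 of your argument (bounding the value at $r_0$ by the $L^p$ norm) relies on monotonicity and nonnegativity, and these fail for differences. In fact the ``in particular'' clause is false in general when $p<N$. Take $u\equiv 1$ and let $u_n$ be $0$ on $[0,1/n]$, linear on $[1/n,2/n]$, and $1$ on $[2/n,1]$. Then $u_n,u\in\mathcal{M}$,
\[
\|\nabla(u_n-u)\|_{L^p(B)}^p=|S^{N-1}|\int_{1/n}^{2/n} n^p\,r^{N-1}\,dr = c_{N}\,n^{p-N}\to 0,
\qquad \|u_n-u\|_{L^p(B)}\to 0,
\]
so $u_n\to u$ strongly in $W^{1,p}(B)$, yet $\|u_n-u\|_{L^\infty}=1$ for every $n$. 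Your Arzel\`a--Ascoli alternative only yields uniform convergence on $[r_0,1]$; the failure occurs near the origin, where neither argument gives any control on the difference. (The paper does not prove the ``in particular'' either, and in the sequel only uses the $L^\infty$ bound itself together with the special case $u=0$, where $u_n-0=u_n\in\mathcal{M}$ and your estimate applies directly.)
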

\begin{proof}
  For any $0<r<1$, the fact that $u$ is non-negative and increasing
  implies that $\max_{x \in B} |u(x)| = \max_{x \in B \setminus
    \overline{B(0,r)}} |u(x)|$. By a straightforward modification of the radial Lemma 2.1 proved in
  \cite{gs}, there exists a positive constant, independent of $u$,
  such that
\[
\max_{x \in B \setminus \overline{B(0,r)}} |u(x)| \leq C
\|u\|_{W^{1,p}(B \setminus \overline{B(0,r)})} \leq C \|u\|.
\] 
The proof is complete.
\end{proof}

We define the functional
\begin{equation*}
  I(u)=\int_B a(|x|) F(u)\, dx,
\end{equation*}
and we notice that, since $F$ is increasing, 
\[
I(u) =\int_B a(|x|) F(u)\, dx \leq \int_B a(|x|) F(\|u\|_{L^\infty}) \, dx = F(\|u\|_{L^\infty}) \|a\|_{L^1(B)},
\]
for any $u \in \mathcal{M}$. Therefore $I$ is well defined on $\mathcal{M}$.

\begin{remark}
  We do not really need the monotonicity of $F$, since $F$ is a
  continuous function and $u$ is in $L^\infty(B)$. However, we
  highlight that $I$ cannot be defined on the whole space~$W^{1,p}(B)$,
  since we have no growth limitation on $F$.
\end{remark}

\begin{lemma}
  The quantity $S$ is a finite positive number.
\end{lemma}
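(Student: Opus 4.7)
The plan is to prove the two inequalities $0 < S < +\infty$ separately, each using one of the ingredients already set up in the excerpt.

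For the upper bound, I would combine Proposition \ref{prop:infinity} with the monotonicity of $F$. Every admissible $u \in \mathcal{M}$ with $\|u\|^p = 1$ satisfies $\|u\|_{L^\infty} \le C_{\mathcal{M}}$, and then, exactly as in the calculation displayed just before the lemma,
\[
I(u) = \int_B a(|x|) F(u)\,dx \le F(\|u\|_{L^\infty})\,\|a\|_{L^1(B)} \le F(C_{\mathcal{M}})\,\|a\|_{L^1(B)},
\]
which is a finite bound independent of $u$. Taking the supremum gives $S \le F(C_{\mathcal{M}})\,\|a\|_{L^1(B)} < +\infty$.

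For the lower bound I would just exhibit one competitor. The natural choice is a nonzero constant function $u \equiv c$, which is trivially radial and (weakly) increasing, lies in $W^{1,p}(B)$, and is nonnegative. Normalising so that $\|u\|^p = c^p |B| = 1$ fixes $c = |B|^{-1/p} > 0$. Then $F(c) > 0$ because assumption (F) (together with the remark that follows it) makes $F$ strictly increasing from $F(0) = 0$; and since $a > 0$ a.e.\ by (A),
\[
S \;\ge\; I(u) \;=\; F(c)\int_B a(|x|)\,dx \;=\; F(c)\,\|a\|_{L^1(B)} \;>\; 0.
\]

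Neither step presents a real obstacle: the upper bound is essentially a restatement of the $L^\infty$ control afforded by radial monotonicity (this is the only nontrivial input, and it has already been proved), while the lower bound needs nothing beyond producing a single admissible function on which $I$ is strictly positive. The only point worth being a bit careful about is checking that constants genuinely belong to $\mathcal{M}$ under the convention $u(|x|) \le u(|y|)$ for $|x| < |y|$, which is the case since the inequality is non-strict.
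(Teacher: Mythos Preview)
Your proof is correct and matches the paper's approach essentially verbatim: the upper bound is the same chain of inequalities via Proposition~\ref{prop:infinity}, and for the lower bound the paper simply remarks that $S>0$ follows from the strict positivity of $a$ and $F$ away from zero, which your explicit constant competitor makes concrete.
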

\begin{proof}
  Indeed, for any $u \in \mathcal{M}$ such that $\|u\|^p=1$, we can
  use Proposition \ref{prop:infinity}
\begin{eqnarray*}
  \int_B a(|x|) F(u)\, dx &\leq& \int_B a(|x|) F(\|u\|_{L^\infty}) \, dx = F(\|u\|_{L^\infty}) \|a\|_{L^1(B)} \\
  &\leq& F(C_{\mathcal{M}}) \|a\|_{L^1(B)}.
\end{eqnarray*}

The fact that $S>0$ is a trivial consequence of the definition of $S$ and of the strict positivity of $a$ and $F$ away from zero.

\end{proof}

\begin{proposition}
The value $S$ is attained.
\end{proposition}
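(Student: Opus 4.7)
The plan is to apply the direct method of the calculus of variations. I take a maximizing sequence $(u_n) \subset \mathcal{M}$ with $\|u_n\|^p = 1$ and $I(u_n) \to S$. The sequence is bounded in the reflexive space $W^{1,p}(B)$, so up to extraction $u_n \rightharpoonup u$ weakly in $W^{1,p}(B)$, and by the Rellich--Kondrachov theorem $u_n \to u$ in $L^p(B)$ and pointwise a.e. along a further subsequence. Weak lower semicontinuity of the norm gives $\|u\|^p \leq 1$.

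Next I show $u \in \mathcal{M}$. The set $\mathcal{M}$ is convex: sums and nonnegative scalar multiples of radially increasing nonnegative functions are again radially increasing and nonnegative. It is also strongly closed in $W^{1,p}(B)$, since pointwise a.e. convergence preserves radial symmetry, nonnegativity, and the inequality $u(|x|) \leq u(|y|)$ for $|x|<|y|$. A convex strongly closed set is weakly closed by Mazur's theorem, so $u \in \mathcal{M}$.

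To pass to the limit in $I(u_n)$, I combine Proposition~\ref{prop:infinity} (which gives $\|u_n\|_{L^\infty} \leq C_{\mathcal{M}}$ uniformly, and the same bound for $u$) with the continuity of $F$: pointwise a.e.\ convergence $u_n \to u$ yields $F(u_n) \to F(u)$ a.e., and the dominating function $a(|x|) F(C_{\mathcal{M}}) \in L^1(B)$ legitimises Lebesgue's theorem, so $I(u_n) \to I(u) = S$. In particular $S>0$ forces $u$ to be positive on a set of positive measure.

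The remaining point, which I expect to be the only real subtlety, is to rule out the slack $\|u\|^p < 1$. Since $\mathcal{M}$ is a cone, one can rescale: if $0 < \|u\| < 1$, set $\lambda = \|u\|^{-1} > 1$, so $\lambda u \in \mathcal{M}$ with $\|\lambda u\| = 1$. On $\{u>0\}$ we have $\lambda u > u$, and strict monotonicity of $F$ (noted in the remark after assumption~(F)) gives $F(\lambda u) > F(u)$; integrating against the positive weight $a(|x|)$ yields $I(\lambda u) > I(u) = S$, contradicting the definition of $S$. Hence $\|u\|^p = 1$ and $u$ attains the supremum.
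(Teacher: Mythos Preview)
Your proof is correct and follows essentially the same route as the paper: a maximizing sequence, weak compactness, passage to the limit in $I$ via the uniform $L^\infty$ bound and dominated convergence, and finally the rescaling argument using strict monotonicity of $F$ to rule out $\|u\|<1$. Your invocation of Mazur's theorem for the weak closedness of $\mathcal{M}$ and your explicit handling of the set $\{u>0\}$ in the last step are minor elaborations on what the paper leaves as ``easily checked,'' but the structure is identical.
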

\begin{proof}
  We take any sequence $\{u_n\}_{n=1}^\infty$ in $\mathcal{M}$ such
  that $\|u_n\|^p=1$ for each $n \geq 1$ and $I(u_n) \to S$. In
  particular this sequence is bounded in $W^{1,p}(B)$, a reflexive
  Banach space. We can assume, up to a subsequence, that $u_n$
  converges weakly in $W^{1,p}(B)$ and pointwise almost everywhere to
  some $u \in W^{1,p}(B)$. It is easily checked that $u \in
  \mathcal{M}$. Since $\sup_n \|u_n\|_{L^\infty} < +\infty$ by Proposition
  \ref{prop:infinity}, by Lebesgue's theorem on dominated convergence,
  $S=\lim_n I(u_n) = \int_B a(|x|) F(u)\, dx$.  Now, if $\|u\|^p=1$,
  we are done: $S$ is attained at $u$. By the weak lower
  semicontinuity of the norm, $\|u\|^p \leq \liminf_n
  \|u_n\|^p=1$. The case $u=0$ is excluded, since $\{u_n\}$ would then
  converge strongly to zero, and therefore $S=0$. Assume that
  $0<\|u\|^p<1$. Then $\tilde{u}=u/\|u\|^p$ lies in $\mathcal{M}$ and
  $\|\tilde{u}\|^p=1$. Therefore, the strict monotonicity of $F$
  implies
\[
I(\tilde{u}) = \int_B a(|x|) F\left(\|u\|^{-p} u(|x|)\right)\, dx >
\int_B a(|x|) F(u(|x|))\, dx = S.
\]
This contradiction shows that $\|u\|^p=1$, and in particular $\lim_n
\|u_n\|^p = \|u\|^p$. This means also that $\{u_n\}$ converges to $u$
strongly in $W^{1,p}(B)$.
\end{proof}

\subsection{Maximizers are solutions of the differential equation} \label{sec:3}

In the last section we proved that $S$ is attained by a radially
symmetric function $u$, non-negative and monotone increasing.  
In the standard approach of Critical Point Theory, this $u$ would be a critical point of $I$
constrained to $\mathcal{M}$, and there would exist a Lagrange multiplier.
Our situation is different, since we cannot compare $I(u)$ to $I(u+\varepsilon v)$ for any (radially symmetric)  $v\in W^{1,p}(B)$. Indeed, $u+\varepsilon v$ need not be positive, nor increasing.

However $u$ is, in a very weak sense, a solution to a differential
inequality.

\begin{proposition}\label{prop:10}
  If $v \in W^{1,p}(B)$ is a radial function such that $u+\varepsilon
  v \in \mathcal{M}$ for any~$\varepsilon \ll 1$, then there exists
  $\lambda=\lambda(u)>0$ such that
\begin{equation}\label{supsol}
\int_B |\nabla u|^{p-2} \nabla u \cdot \nabla v +  |u|^{p-2} u v \, dx \geq \lambda \int_B a(|x|)f(u)v \, dx.
\end{equation}
\end{proposition}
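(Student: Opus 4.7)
The strategy is a one-sided variational argument, adapted to the cone structure of $\mathcal{M}$. For small $\varepsilon\geq 0$ I set
\[
u_\varepsilon = \frac{u + \varepsilon v}{\|u + \varepsilon v\|}.
\]
Since $\mathcal{M}$ is invariant under multiplication by positive scalars and $u + \varepsilon v \in \mathcal{M}$ by hypothesis, $u_\varepsilon \in \mathcal{M}$ and $\|u_\varepsilon\|=1$. Because $u$ attains the supremum $S$, one has $I(u_\varepsilon) \leq I(u)$ for every sufficiently small $\varepsilon > 0$, so the right derivative of $\varepsilon \mapsto I(u_\varepsilon)$ at $\varepsilon = 0$ is non-positive. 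This single inequality will encode the entire conclusion (\ref{supsol}); the fact that I only dispose of a one-sided variation is precisely what forces the $\geq$ rather than an equality.

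Next I compute the derivative by the chain rule. Differentiating the map $\varepsilon \mapsto \|u+\varepsilon v\|^p$ and using $\|u\|=1$,
\[
\frac{d}{d\varepsilon} \|u + \varepsilon v\| \Big|_{\varepsilon = 0^+} = \int_B \left( |\nabla u|^{p-2} \nabla u \cdot \nabla v + |u|^{p-2} u v \right) dx =: J(u,v),
\]
while the analogue for $I$ gives
\[
\frac{d}{d\varepsilon} I(u + \varepsilon v) \Big|_{\varepsilon = 0^+} = \int_B a(|x|)\, f(u)\, v\, dx.
\]
Putting these together via the quotient rule, the non-positivity of the right derivative of $I(u_\varepsilon)$ reads
\[
\int_B a(|x|) f(u) v \, dx \;\leq\; J(u,v) \int_B a(|x|)\, f(u)\, u \, dx.
\]

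The technical point that needs care is justifying differentiation under the integral in the definition of $I$. Here I exploit that both $u \in \mathcal{M}$ and $u+\varepsilon_0 v \in \mathcal{M}$ are bounded by Proposition \ref{prop:infinity}, hence $v = \varepsilon_0^{-1}\bigl((u+\varepsilon_0 v) - u\bigr) \in L^\infty(B)$. Along the segment $\{u+\varepsilon v : 0 \leq \varepsilon \leq \varepsilon_0\}$ we have a uniform $L^\infty$ bound $M$, so the mean-value inequality gives $|F(u+\varepsilon v)-F(u)|/\varepsilon \leq |v| \sup_{[0,M]} f$, a function dominated (after multiplication by $a$) by a multiple of the integrable function $a$. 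Lebesgue's dominated convergence theorem then yields the formula above.

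Finally I set
\[
\lambda = \lambda(u) := \left( \int_B a(|x|)\, f(u)\, u \, dx \right)^{-1},
\]
which rearranges the inequality into (\ref{supsol}). Positivity of $\lambda$ follows because $u\in\mathcal{M}$ with $\|u\|=1$ cannot be identically zero, so by radial monotonicity $u>0$ on some annulus of positive measure, where $a>0$ a.e.\ (by assumption (A)) and $f(u)>0$ (by assumption (F)); hence the defining integral is strictly positive. The main obstacle is the dominated convergence step, which would fail without the $L^\infty$ control supplied by the cone $\mathcal{M}$ — this is precisely why working on $\mathcal{M}$ (rather than the full Sobolev space) allows us to bypass any growth assumption on $f$.
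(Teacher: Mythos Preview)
Your argument is correct and follows essentially the same route as the paper: both normalize the perturbation $u+\varepsilon v$ to lie on the constraint $\|\cdot\|=1$, use that the resulting map $\varepsilon\mapsto I(u_\varepsilon)$ has a maximum at $\varepsilon=0$, differentiate, and read off the inequality with $\lambda=\bigl(\int_B a f(u)u\bigr)^{-1}$. Your added justification of the dominated-convergence step (via the $L^\infty$ bound on $v$ obtained from $u,\,u+\varepsilon_0 v\in\mathcal{M}$) and of the strict positivity of $\int_B a f(u)u$ are details the paper leaves implicit, but the core idea is identical.
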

\begin{proof}
  By assumption, $(u+\varepsilon v)/ \|u+\varepsilon v\|^p$ is an
  admissible function for the optimization problem
  (\ref{2}). Therefore, the auxiliary function
\[
G(\varepsilon) = \int_B a(|x|) F \left( \frac{u+\varepsilon v}{\|u+\varepsilon v\|^p} \right) \, dx
\]
attains a maximum at $\varepsilon=0$. By a direct calculation, and recalling that $\|u\|^p=1$, we find
\begin{multline*}
0 \geq G'(0)  =\\
=\int_B a(|x|) f(u) \left( v - 
\left(
\int_B |\nabla u|^{p-2} \nabla u \cdot \nabla v + \int_B |u|^{p-2}uv
\right)
u
\right) \, dx.
\end{multline*}
We can rewrite this inequality in the form
\[
\int_B |\nabla u|^{p-2} \nabla u \cdot \nabla v + \int_B |u|^{p-2}uv
\geq \frac{1}{\int_B a(|x|)f(u)u} \int_B a(|x|)f(u)v,
\]
and the conclusion follows by setting $\lambda=1/\int_B a(|x|)f(u)u$.
\end{proof}
Of course, if (\ref{supsol}) holds true for any radial element $v \in
W^{1,p}(B)$, then we can conclude that $u$ is a weak supersolution to
(\ref{2}). This is indeed true, but we need some more work. We begin
with a sort of ``maximum principle''. This not obvious, since $u$ is
not (yet) a \emph{solution} of an equation.

\begin{lemma} \label{lem:pos}
The function $u$ is strictly positive in $\overline{B}$.
\end{lemma}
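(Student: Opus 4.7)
The plan is to argue by contradiction: assume $u(0)=0$. Since $u$ is radial and non-decreasing in $|x|$, this forces $\{u=0\}=\overline{B_{r_0}}$ for some $r_0\in[0,1)$, where $B_{r_0}$ denotes the open ball of radius $r_0$ centred at the origin; in particular the continuous representative of $u$ vanishes identically on $[0,r_0]$.

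The strategy is to construct an admissible competitor $w\in\mathcal{M}$ satisfying $\|w\|\leq 1$ and $I(w)>S$. Given such a $w$, the superhomogeneity of $F$ encoded in hypothesis~(F) finishes the job: the strict monotonicity of $t\mapsto f(t)/t^{p-1}$ is equivalent to $F(\theta t)>\theta^{p}F(t)$ for every $\theta>1$ and every $t>0$, exactly as in the proof that $S$ is attained. Setting $\tilde w=w/\|w\|\in\mathcal{M}$, so that $\|\tilde w\|=1$, and applying this inequality pointwise with $\theta=1/\|w\|\geq 1$ at every point where $w>0$, one obtains $F(\tilde w(x))\geq\|w\|^{-p}F(w(x))\geq F(w(x))$; integrating against $a$ then yields $I(\tilde w)\geq I(w)>S$, contradicting the maximality of $u$.

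To build the competitor, we truncate from below: for a small $c>0$, set $w(r)=\max\{u(r),c\}$ and $r_c=\inf\{r\in[0,1]:u(r)\geq c\}$, so that $w\equiv c$ on $\overline{B_{r_c}}$ and $w=u$ on $B\setminus B_{r_c}$. Clearly $w\in\mathcal{M}$ and $w\geq c>0$ on $\overline{B}$; the inequality $I(w)>I(u)$ is immediate because $w>u$ on the positive-measure set $B_{r_c}$, while $a>0$ almost everywhere and $F$ is strictly increasing.

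The remaining point is the norm bound $\|w\|^{p}\leq 1$, and a direct calculation reduces it to
\[
c^{p}\,|B_{r_c}|\leq\int_{B_{r_c}}\bigl(|\nabla u|^{p}+u^{p}\bigr)\,dx.
\]
When $r_0>0$ this is immediate: $u$ runs from $0$ to $c$ across the thin shell $B_{r_c}\setminus B_{r_0}$, so the weighted Hölder estimate $c^{p}\leq(r_c-r_0)^{p-1}\int_{r_0}^{r_c}|u'(s)|^{p}\,ds$ forces $\int_{B_{r_c}}|\nabla u|^{p}$ to dominate $c^{p}|B_{r_c}|$ as soon as $c$ (and hence the shell thickness) is small enough. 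The residual case $r_0=0$ is the main obstacle: there $c-u\in W^{1,p}_{0}(B_{r_c})$ (since $u=c$ on $\partial B_{r_c}$), and Poincaré's inequality on $B_{r_c}$ combined with the representation $c=\int_{0}^{r_c}u'(s)\,ds$ yields the same estimate for $c$ sufficiently small.
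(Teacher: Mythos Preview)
Your competitor argument is clean when $r_0>0$, but the case $r_0=0$ has a real gap. You need
\[
c^{p}\,|B_{r_c}|\le\int_{B_{r_c}}\bigl(|\nabla u|^{p}+u^{p}\bigr),
\]
and you propose to obtain it from Poincar\'e together with $c=\int_0^{r_c}u'(s)\,ds$. Neither ingredient does the job when $p\le N$ (and the paper allows $1<p<\infty$, $N\ge 3$). Poincar\'e applied to $c-u\in W^{1,p}_0(B_{r_c})$ yields only $\|c\|_{L^p(B_{r_c})}\le\|u\|_{L^p(B_{r_c})}+C_N r_c\|\nabla u\|_{L^p(B_{r_c})}$, and there is no way to pass from this $\ell^1$-type estimate to the $\ell^p$-type estimate $\|c\|_{L^p}^p\le\|u\|_{L^p}^p+\|\nabla u\|_{L^p}^p$ without a spurious constant such as $2^{p-1}$. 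The one-dimensional identity $c=\int_0^{r_c}u'$ does not help either: the weighted H\"older step $c\le\bigl(\int_0^{r_c}|u'|^p s^{N-1}\,ds\bigr)^{1/p}\bigl(\int_0^{r_c}s^{-(N-1)/(p-1)}\,ds\bigr)^{(p-1)/p}$ breaks down because the second factor diverges at $0$ whenever $p\le N$. Worse, the desired inequality is \emph{not} a consequence of $u(0)=0$ alone: for $p<N$ the origin has zero $p$-capacity, so on any fixed $B_r$ one can take a radial increasing $v$ with $v(0)=0$, $v(r)=c$, $v<c$ on $[0,r)$ and $\int_{B_r}(|\nabla v|^p+v^p)<c^p|B_r|$ --- just approximate the energy minimizer with boundary value $c$ (which is strictly positive at the origin) and push it to $0$ near $0$ at arbitrarily small energy cost. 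Thus some information specific to the \emph{maximizer} $u$ must enter, and your argument never uses any.

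The paper supplies exactly that missing information. It first extracts from the maximality of $u$ the one-sided variational inequality
\[
\int_B|\nabla u|^{p-2}\nabla u\cdot\nabla v+|u|^{p-2}uv\,dx\ \ge\ \lambda\int_B a(|x|)f(u)\,v\,dx
\]
for every radial $v$ with $u+\varepsilon v\in\mathcal{M}$ for small $\varepsilon>0$ (Proposition~\ref{prop:10}), and then runs a comparison-principle argument rather than a competitor argument: it builds a strictly positive, radially increasing subsolution $\varphi$ (a multiple of $e^{|x|}$, normalized so that $\varphi=u$ on $\partial B$), tests the inequality above with $v=(\varphi-u)^{+}$ (admissible because $u+\varepsilon v=(1-\varepsilon)u+\varepsilon\max(u,\varphi)$ is a convex combination of increasing nonnegative functions), and subtracts the strict subsolution inequality for $\varphi$. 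The elementary monotonicity estimate $\langle|x|^{p-2}x-|y|^{p-2}y,\,x-y\rangle\ge c_p|x-y|^p$ (or its $1<p<2$ variant) then forces $(\varphi-u)^{+}\equiv 0$, i.e.\ $u\ge\varphi>0$ on $\overline{B}$. So the paper's route goes through the supersolution property of $u$; your truncation idea bypasses this, and that is precisely what leaves the $r_0=0$ case unproved.
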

\begin{proof}
  Consider the auxiliary function $\varphi (x)=e^{|x|}$, defined for
  all $x \in \mathbb{R}^N$. It is easy to show that $\varphi \in
  W^{1,p}(B)$. It is of course a positive radial function, and
  monotone increasing. We are going to prove that $u(x) \geq \kappa
  \varphi(x)$ for all~$x \in \overline{B}$, provided $\kappa>0$ is
  chosen suitably. Fix an arbitrary radial function $\psi \in
  W^{1,p}(B)$ with $\psi \geq 0$ and $\psi_{|\partial B}=0$. Then,
  denoting by $|S^{N-1}|$ the Lebesgue measure of the unit sphere in
  $\mathbb{R}^N$,
\begin{eqnarray*}
  \int_B |\nabla \varphi|^{p-2} \nabla \varphi \cdot \nabla \psi \, dx &=& \int_B e^{(p-1)|x|} \frac{x}{|x|} \cdot \nabla \psi \, dx \\
  &=& |S^{N-1}| \int_0^1 e^{(p-1)r} \psi'(r) r^{N-1}\, dr \\
  &=& -  |S^{N-1}| \int_0^1 \left(e^{(p-1)r} r^{N-1} \right)' \psi(r) \, dr,
\end{eqnarray*}
so that
\begin{multline} \label{4}
  \int_B |\nabla \varphi|^{p-2} \nabla \varphi \cdot \nabla \psi \, dx + \int_B |\varphi|^{p-2}\varphi \psi \,dx  \\
  = -  |S^{N-1}| \int_0^1 \left(e^{(p-1)r} r^{N-1} \right)' \psi(r) \, dr + |S^{N-1}| \int_0^1 e^{(p-1)r} \psi(r) \, dr \\
  = |S^{N-1}| \int_0^1 e^{(p-1)r} \left( 1-N+(2-p)r \right)\, dr < 0
\end{multline}
since $N \geq 3$. Now we choose $\kappa>0$ such that $\kappa \varphi$
equals $u$ on the boundary. Explicitely, $\kappa = u(1)/e$, where we
have denoted by $u(1)$ the constant value of $u$ on $\partial B$. From
now on, we will write again $\varphi$ instead of $\kappa \varphi$.
We
now apply (\ref{4}) to $\psi = (\varphi-u)^{+} \in W_0^{1,p}(B)$. This
function is radially symmetric and positive in $B$. From
(\ref{supsol}) we get
\begin{equation} \label{5}
\int_B |\nabla u|^{p-2} \nabla u \cdot \nabla (\varphi - u)^{+} + \int_B |u|^{p-2} u (\varphi - u)^{+} \geq \lambda \int_B a(|x|)f(u)(\varphi - u)^{+},
\end{equation}
whereas from (\ref{4}) we get
\begin{equation} \label{6} \int_B |\nabla \varphi|^{p-2} \nabla
  \varphi \cdot \nabla (\varphi - u)^{+} \, dx + \int_B
  |\varphi|^{p-2}\varphi (\varphi - u)^{+} \,dx <0.
\end{equation}
Subtracting (\ref{6}) from (\ref{5}) we get
\begin{multline}
  \int_B \left( |\nabla u|^{p-2}\nabla u - |\nabla \varphi|^{p-2} \nabla \varphi \right) \cdot \nabla (\varphi-u)^{+} + \\
  {}+ \int_B \left(|u|^{p-2}u - |\varphi|^{p-2} \varphi \right)
  (\varphi - u)^{+} \geq 0.
\end{multline}
Applying Lemma \ref{lem:elem} below, we deduce that
\begin{multline*}
  0 \leq \int_B \left( |\nabla u|^{p-2}\nabla u - |\nabla \varphi|^{p-2} \nabla \varphi \right) \cdot \nabla (\varphi-u)^{+} + \\
  {}+ \int_B \left(|u|^{p-2}u - |\varphi|^{p-2} \varphi \right) (\varphi - u)^{+} \\
  \leq - c_p \|(\varphi-u)^{+}\|^p \quad\text{if $p \geq 2$},
\end{multline*}
and 
\begin{eqnarray*}
  0 &\leq& \int_B \left( |\nabla u|^{p-2}\nabla u - |\nabla \varphi|^{p-2} \nabla \varphi \right) \cdot \nabla (\varphi-u)^{+} + \\
  &&{}+ \int_B \left(|u|^{p-2}u - |\varphi|^{p-2} \varphi \right) (\varphi - u)^{+} \\
  &\leq& - c_p \left( \int_B \frac{|\nabla (\varphi-u)^{+}|^2}{(|\nabla \varphi|+|\nabla u|)^{2-p}} + \int_B \frac{|(\varphi-u)^{+}|^2}{(|\varphi|+|u|)^{2-p}} \right) \quad\text{if $1<p<2$}.
\end{eqnarray*}
for some constant~$c_p>0$.  This implies in both cases that
$(\varphi-u)^{+}=0$, and we conclude that $u \geq \varphi$ in
$\overline{B}$.
\end{proof}

\begin{lemma}[\cite{simon}, \cite{l}] \label{lem:elem}
Given $1<p<\infty$, there exists a universal constant $c_p>0$ such that
\[
\langle |x|^{p-2}x-|y|^{p-2}y, x-y \rangle \geq 
\left\lbrace
\begin{array}{ll}
c_p |x-y|^p &\text{if $p\geq 2$} \\
c_p \frac{|x-y|^2}{(|x|+|y|)^{2-p}} &\text{if $1<p<2$}
\end{array}
\right.
\]
for any $x$, $y \in \mathbb{R}^N$.
\end{lemma}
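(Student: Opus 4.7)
The plan is to represent $\langle|x|^{p-2}x-|y|^{p-2}y, x-y\rangle$ as a line integral of the Jacobian of the vector field $\Phi(z)=|z|^{p-2}z$, bound the integrand through an eigenvalue estimate, and then control the remaining scalar integral $\int_0^1 |\gamma(t)|^{p-2}\,dt$ differently in the two ranges of $p$. Throughout, the cases $x=y$ and $x=y=0$ are trivial, so I assume $x\neq y$.

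First I would compute, for $z\neq 0$, that $D\Phi(z)=|z|^{p-2}\bigl(I+(p-2)\hat z\otimes\hat z\bigr)$ with $\hat z=z/|z|$, a symmetric matrix with eigenvalues $|z|^{p-2}$ (multiplicity $N-1$, orthogonal to $z$) and $(p-1)|z|^{p-2}$ (along $z$). Hence $\langle D\Phi(z)\xi,\xi\rangle\geq \alpha_p|z|^{p-2}|\xi|^2$ with $\alpha_p=\min(1,p-1)>0$. Applying the fundamental theorem of calculus along the segment $\gamma(t)=y+t(x-y)$ yields
\[
\langle|x|^{p-2}x-|y|^{p-2}y, x-y\rangle \;=\; \int_0^1 \langle D\Phi(\gamma(t))(x-y),x-y\rangle\,dt \;\geq\; \alpha_p|x-y|^2 \int_0^1 |\gamma(t)|^{p-2}\,dt.
\]
The isolated zero of $\gamma$, if any, contributes nothing because $p-2>-1$.

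For the range $1<p<2$ the exponent is negative, so the triangle inequality $|\gamma(t)|\leq|x|+|y|$ immediately gives $|\gamma(t)|^{p-2}\geq (|x|+|y|)^{p-2}$; integrating over $t\in[0,1]$ reproduces the claimed bound with constant $\alpha_p$.

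The hard part is the case $p\geq 2$, where I need a lower bound of the form $\int_0^1|\gamma(t)|^{p-2}\,dt\geq c\,|x-y|^{p-2}$ independent of the position of $\gamma$ relative to the origin. Here I would invoke the geometric observation that $\gamma$ traverses a straight line at constant speed $|x-y|$, so its intersection with the ball of radius $\rho$ centred at $0$ is a segment of length at most $2\rho$; consequently $|\{t\in[0,1]:|\gamma(t)|<\rho\}|\leq 2\rho/|x-y|$. Taking $\rho=|x-y|/4$ leaves a complementary set of measure $\geq 1/2$ on which $|\gamma(t)|^{p-2}\geq (|x-y|/4)^{p-2}$, which yields $\int_0^1|\gamma(t)|^{p-2}\,dt\geq \tfrac{1}{2}(|x-y|/4)^{p-2}$ and hence the desired inequality with an explicit $c_p>0$.
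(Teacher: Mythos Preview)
Your argument is correct. The line--integral representation via the Jacobian of $\Phi(z)=|z|^{p-2}z$, the eigenvalue bound $\langle D\Phi(z)\xi,\xi\rangle\geq\min(1,p-1)\,|z|^{p-2}|\xi|^2$, the trivial estimate $|\gamma(t)|\leq |x|+|y|$ for $1<p<2$, and the chord--length argument for $p\geq 2$ all go through as you wrote them. The only point worth a remark is the passage through the origin: for $1<p<2$ the map $\Phi$ is merely H\"older at $0$, so the fundamental theorem of calculus along $\gamma$ requires the short justification you sketched (split at the zero of $\gamma$, use continuity of $\Phi$ at $0$ and integrability of $|\gamma(t)|^{p-2}$ near the singularity). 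You might make that one line explicit rather than parenthetical.

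As for comparison with the paper: the paper does not prove this lemma at all; it merely quotes the inequality from the references of Simon and Lindqvist. Your self--contained argument is the standard one underlying those references, and it has the minor advantage of giving explicit constants (for instance $c_p=2^{3-2p}$ when $p\geq 2$).
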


\begin{remark}
A slightly different proof of Lemma \ref{lem:pos} can be easily obtained by modifying the ideas of \cite[Lemma 6]{St}. Indeed, consider the problem
\begin{equation*}
\left\lbrace
\begin{array}{ll}
-\Delta_p \varphi +|\varphi|^{p-2}\varphi =0 &\text{in $B$} \\
\varphi=u &\text{on $\partial B.$}
\end{array}
\right.
\end{equation*}
By standard arguments (see \cite{hk}) there is one and only one
solution $\varphi \in W^{1,p}(B)$ such that $\varphi-u \in
W_0^{1,p}(B)$. By the version of Palais' Principle of Symmetric
Criticality for Banach spaces (see \cite{pal}) this solution must
coincide with the unique solution of $\inf \{ \|\varphi\|^p \mid
\text{$\varphi-u \in W_0^{1,p}(B)$, $\varphi$ radial}\}$, and
therefore $\varphi$ is a radial function. Since $u>0$ on $\partial B$,
we have $\varphi \geq 0$ in $B$, and by the strong maximum principle
(see \cite{v}) we actually have $u>0$ in $B$. Then $\varphi$ solves
the ordinary differential equation
$(r^{N-1}|\varphi'|^{p-2}\varphi')'=|\varphi|^{p-2}\varphi$ for $r \in
[0,1]$. Setting $w(r)=|\varphi'|^{p-2}\varphi'$ we can check that
$w(0)=0$ and
\[
w'+\frac{N-1}{r}w=|\varphi|^{p-2}\varphi.
\]
Hence
\[
w(r)=r^{1-N} \int_0^r |\varphi(s)|^{p-2}\varphi(s)s^{N-1}\, ds,
\]
and the strict positivity of $\varphi$ implies that $w>0$, \ie\ $\varphi$ is a strictly increasing function. In particular $\varphi-u \in \mathcal{M}$, and we can again find that (\ref{6}) holds true. The we can proceed as in the proof of Lemma \ref{lem:pos} and conclude that $u \geq \varphi$.
\end{remark}

We observe that our maximizer $u$ belongs to $\mathcal{M}$, and is therefore a monotone increasing radial function. Actually, $u$ is a.e. strictly increasing, and this will be a useful piece of information in the next pages.

\begin{lemma}
Any maximizer $u$ for $S$ satisfies
\begin{equation}\label{8}
u'(r)>0 \quad\text{for almost every $r \in (0,1)$}.
\end{equation}
\end{lemma}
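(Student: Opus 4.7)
The strategy is to argue by contradiction. Assume that there exists a non-degenerate closed interval $[r_1,r_2]\subset(0,1)$ on which $u$ is constant, say $u\equiv c$; by Lemma~\ref{lem:pos} we have $c>0$. (The general case in which the zero set of $u'$ has positive measure but contains no interval can be handled afterwards by an analogous argument applied at a Lebesgue density point of $\{u'=0\}$.)

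The plan is to build a competitor $\tilde u\in\mathcal M$ with $\|\tilde u\|^p\le 1$ and $I(\tilde u)>I(u)=S$; once this is done, rescaling $\tilde u/\|\tilde u\|$ remains in $\mathcal M$ and still satisfies $I(\tilde u/\|\tilde u\|)\ge I(\tilde u)>S$ (by assumption (F), since scaling up a non-negative function increases $F$), which contradicts the maximality of $u$. A natural first candidate is the \emph{inward shift}
\[
\tilde u_\delta(r)=\begin{cases}u(r),&0\le r\le r_1,\\ u(r+\delta),&r_1\le r\le 1-\delta,\\ u(1),&1-\delta\le r\le 1,\end{cases}
\]
for a small parameter $\delta\in(0,r_2-r_1)$. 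Because $u\equiv c$ on $[r_1,r_1+\delta]\subset[r_1,r_2]$, the function $\tilde u_\delta$ is continuous, non-negative, and non-decreasing, hence $\tilde u_\delta\in\mathcal M$; moreover $\tilde u_\delta\ge u$ with strict inequality on a set of positive measure, so by (F) and assumption (A) we have $I(\tilde u_\delta)>I(u)$. A change of variable $s=r+\delta$ then shows that the gradient piece of $\|\tilde u_\delta\|^p$ decreases strictly (the weight $r^{N-1}$ gets replaced by the smaller $(s-\delta)^{N-1}$, and the shift discards the trivial contribution on $[r_1,r_1+\delta]$ where $u'=0$); if the $L^p$ piece behaves well, the whole norm does not increase and we are done.

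When the $L^p$ part happens to rise enough that $\|\tilde u_\delta\|^p>1$, I would switch to a localized \emph{saw-tooth} perturbation adapted to $[r_1,r_2]$. Let $w\in W^{1,p}(B)$ be a radial piecewise linear function supported in a slight thickening $[r_1-\eta,r_2+\eta]$ of the flat interval, with $w(r_1)<0<w(r_2)$, linearly interpolated on $[r_1,r_2]$, and ramped down to $0$ on each side; the ramps are chosen so that $u'+\varepsilon w'\ge 0$ a.e. (using that $u$ is strictly positive on $\overline B$ and that $[r_1,r_2]$ is maximal, so that $u$ carries some positive variation on either side), ensuring $u+\varepsilon w\in\mathcal M$ for all small $\varepsilon\ge 0$. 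Tuning the two free parameters $\alpha$ (amplitude) and $\eta$ (ramp length), one can force
\[
A\;\coloneqq\;\int_B|\nabla u|^{p-2}\nabla u\cdot\nabla w\,dx+\int_B|u|^{p-2}u\,w\,dx\;=\;0,
\]
so that $u+\varepsilon w$ stays on the unit sphere up to order $O(\varepsilon^2)$. The strict monotonicity of $a$ from assumption (A) then gives
\[
\int_B a(|x|)\,f(u)\,w\,dx\;=\;f(c)\int_{[r_1,r_2]} a(|x|)\,w\,dx\;+\;\text{side terms}\;>\;0,
\]
because the positive lobe of $w$, supported where $|x|$ is larger, is weighted more heavily by $a$ than the negative lobe, while the gradient part on $[r_1,r_2]$ is zero since $\nabla u=0$ there. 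This strictly violates the supersolution inequality~\eqref{supsol} of Proposition~\ref{prop:10}, giving the contradiction.

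The main obstacle is the delicate bookkeeping between the norm constraint and the admissibility of the perturbation: the ramps outside $[r_1,r_2]$ have to be designed so that $u+\varepsilon w$ stays non-decreasing, which hinges on Lemma~\ref{lem:pos} and on taking $[r_1,r_2]$ to be a maximal flat interval. The second obstacle is the reduction of the case $|\{u'=0\}|>0$ without an interval of constancy to the interval case: this is achieved by the same saw-tooth variation built at a density point of $\{u'=0\}$, where $u'$ is small on average and the same first-order analysis applies.
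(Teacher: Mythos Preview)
Your argument has a genuine gap at the admissibility of the saw-tooth perturbation. On the left ramp $[r_1-\eta,r_1]$ (and symmetrically on the right) your function $w$ is strictly decreasing, so $u+\varepsilon w\in\mathcal M$ forces $u'(r)\ge \varepsilon|w'(r)|$ for a.e.\ $r$ in that ramp. Maximality of the flat interval $[r_1,r_2]$ only tells you that $u$ is not constant on any larger interval; it gives no pointwise (or essential-infimum) lower bound on $u'$ just outside $[r_1,r_2]$. Thus there may be \emph{no} $\varepsilon>0$ for which $u+\varepsilon w$ is nondecreasing, and Proposition~\ref{prop:10} never applies. The same obstruction is worse, not better, at a Lebesgue density point of $\{u'=0\}$: by definition $u'$ vanishes on almost all of any small neighbourhood, so there is nowhere to borrow slope from, and the promised reduction of the general case to the interval case breaks down. (Note also that any $w$ supported entirely in the flat interval with $u+\varepsilon w\in\mathcal M$ must satisfy $w'\ge 0$ and $w(r_1)\ge 0\ge w(r_2)$, hence $w\equiv 0$; so the ramps cannot simply be moved inside.) Finally, your inward shift is, as you note, inconclusive on its own because the $L^p$ part of the norm increases, and rescaling down would \emph{decrease} $I$, not increase it.

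The paper sidesteps the admissibility problem by choosing test functions that are \emph{globally nondecreasing}: for each $\rho\in(0,1)$ and small $\delta>0$ it takes $v_\delta$ equal to $-1$ on $[0,\rho]$, linear on $[\rho,\rho+\delta]$, and equal to a constant $\mu>0$ on $[\rho+\delta,1]$. Then $u+sv_\delta\in\mathcal M$ for all small $s>0$ with no hypothesis on $u'$ whatsoever, so Proposition~\ref{prop:10} applies. Letting $\delta\to 0$ and using Lebesgue differentiation turns the gradient term into $(1+\mu)\rho^{N-1}|u'(\rho)|^{p-2}u'(\rho)$. The strict sign on the right comes not from comparing ``lobes'' of $w$ under the weight $a$, but from the strict monotonicity of $r\mapsto a(r)\,f(u(r))/u(r)^{p-1}$, i.e.\ assumption~(F) together with~(A); a judicious choice of $\mu$ then kills the remaining terms and yields $u'(\rho)>0$ directly, for a.e.\ $\rho$, with no contradiction argument and no interval-of-constancy reduction.
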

\begin{proof}
Pick a number $\rho \in (0,1)$. For the time being, let $\mu >0$ a parameter that we will choose suitably, and let $\delta>0$. The function $v_\delta \colon [0,1] \to \mathbb{R}$, defined by the formula
\begin{equation*}
v_\delta(r)=
\begin{cases}
-1 &\text{if $0 \leq r \leq \rho$}\\
-1-\frac{1+\delta}{\rho}(r-\rho) &\text{if $\rho<r \leq \rho+\delta$}\\
\mu &\text{if $\rho+\delta<r \leq 1$},
\end{cases}
\end{equation*}
is continuous. Clearly $x \mapsto v_\delta(|x|)$ belongs to $W^{1,p}(B)$. For every positive and \emph{small} number $s$, we can check that $u+s v_\delta \in \mathcal{M}$. By (\ref{supsol}), 
\begin{equation} \label{9}
\int_B |\nabla u|^{p-2} \nabla u \cdot \nabla v_\delta +  |u|^{p-2} u v_\delta \, dx \geq \lambda \int_B a(|x|)f(u)v_\delta \, dx.
\end{equation}
Pointwise and in every Lebesgue space $L^q(B)$ with $q<\infty$, as $\delta \to 0^{+}$ the function $v_\delta$ tends to
\[
v(r)=
\begin{cases}
-1 &\text{if $0 \leq r \leq \rho$}\\
\mu &\text{if $\rho < r \leq 1$}.
\end{cases}
\]
Therefore
\[
\lim_{\delta \to 0+} \int_B |u|^{p-2} u v_\delta = - \int_{B(0,\rho)} |u|^{p-2}u + \mu \int_{B\setminus B(0,\rho)} |u|^{p-2}u
\]
and
\[
\lim_{\delta \to 0+} \int_B a(|x|) f(u)v_\delta \, dx = -\int_{B(0,\rho)} a(|x|) f(u)\, dx + \mu \int_{B\setminus B(0,\rho)} a(|x|) f(u)\, dx.
\]
On the other hand, since $|u'|^{p-2}u' \in L^1_{\textrm{loc}}(0,1)$, 
\begin{eqnarray*}
\int_B |\nabla u|^{p-2}\nabla u \cdot \nabla v_\delta &=& |S^{N-1}| \int_0^1 |u'(r)|^{p-2}u'(r) v_\delta'(r) r^{N-1}\, dr \\
&=& \frac{1+\mu}{\delta} |S^{N-1}| \int_\rho^{\rho+\delta}  |u'(r)|^{p-2}u'(r) r^{N-1}\, dr \\
&\leq& |S^{N-1}| (1+\mu) (\rho+\delta)^{N-1} \frac{1}{\delta} \int_\rho^{\rho+\delta} |u'(r)|^{p-2}u'(r)\, dr \\
&=& |S^{N-1}| (1+\mu)\rho^{N-1} |u'(\rho)|^{p-2}u'(\rho) + o(1)
\end{eqnarray*}
for almost every $\rho \in (0,1)$, as $\delta \to 0^{+}$. If we let $\delta \to 0^{+}$ in (\ref{9}) and use the last three relations, we conclude that
\begin{multline} \label{10}
(1+\mu)\rho^{N-1} |u'(\rho)|^{p-2}u'(\rho) \\
\geq \int_0^\rho |u(r)|^{p-2} u(r) r^{N-1} \, dr - \mu \int_\rho^1 |u(r)|^{p-2} u(r) r^{N-1} \, dr \\
{}- \lambda \int_0^\rho a(r)f(u)r^{N-1}\, dr + \lambda \mu \int_\rho^1 a(r)f(u)r^{N-1}\, dr.
\end{multline}
Since the function $s \mapsto f(s)/s^{p-1}$ is strictly increasing, we can write
\begin{equation*}
a(r)f(u(r))=a(r)\frac{f(u(r))}{u(r)^{p-1}} u(r)^{p-1} \leq a(\rho) \frac{f(u(\rho))}{u(\rho)^{p-1}} u(r)^{p-1}
\end{equation*}
for a.e. $r \in [0,\rho]$, and
\begin{equation*}
a(r)f(u(r))=a(r)\frac{f(u(r))}{u(r)^{p-1}} u(r)^{p-1} \geq a(\rho) \frac{f(u(\rho))}{u(\rho)^{p-1}} u(r)^{p-1}
\end{equation*}
for a.e. $r \in [\rho,1]$. Hence we obtain
\begin{equation*}
-\int_0^\rho a(r) f(u) r^{N-1}\, dr \geq -a(\rho) \frac{f(u(\rho))}{u(\rho)^{p-1}} \int_0^\rho u(r)^{p-1} r^{N-1}\, dr
\end{equation*}
and
\begin{equation*}
\mu \int_\rho^1 a(r) f(u) r^{N-1}\, dr \geq \mu a(\rho) \frac{f(u(\rho))}{u(\rho)^{p-1}} \int_\rho^1 u(r)^{p-1} r^{N-1}\, dr.
\end{equation*}
Now, at least one of these two inequalities must be strict.
Otherwise we would deduce that
\begin{equation*}
a(r) \frac{f(u(r))}{u(r)^{p-1}} = a(\rho) \frac{f(u(\rho))}{u(\rho)^{p-1}} 
\end{equation*}
for a.e. $r \in [0,1]$. Then $u$ would be a constant function, and so would be $a$. But this is in contradiction with our assumptions.

Going back to (\ref{10}), we have proved that
\begin{multline} \label{11}
(1+\mu)\rho^{N-1} |u'(\rho)|^{p-2}u'(\rho) \\
> \int_0^\rho |u(r)|^{p-2} u(r) r^{N-1} \, dr - \mu \int_\rho^1 |u(r)|^{p-2} u(r) r^{N-1} \, dr \\
{}+ a(\rho) \frac{f(u(\rho))}{u(\rho)^{p-1}} \left( -\int_0^\rho u(r)^{p-1}r^{N-1}\, dr + \mu \int_\rho^1 u(r)^{p-1}r^{N-1}\, dr \right).
\end{multline}
If $\mu$ solves the equation
\begin{equation*}
\int_0^\rho u(r)^{p-1}r^{N-1}\, dr = \mu \int_\rho^1 u(r)^{p-1}r^{N-1}\, dr,
\end{equation*}
then (\ref{11}) becomes $u'(r)>0$ for a.e. $r \in [0,1]$.
\end{proof}
\begin{remark}
We have seen that the strict monotonicity of $u$ a.e. depends on the assumption that $a$ is non--constant. On the other hand, suppose that $a \equiv 1$. If the equation $f(s)=s^{p-1}$ possesses a solution $s_0>0$, then our problem (\ref{1}) is solved by the constant function $u(x)=s_0$ (with $\lambda=1$). In particular, when $f(s)=s^q$, there always exists a positive solution $u$ that does not satisfy $u'(r)>0$ a.e. We do not know if there exist non--constant increasing solutions that do not satisfy the strict monotonicity a.e.
\end{remark}
We can now prove that the maximizer $u$ is actually a weak solution of our original problem.
\begin{proposition}
Any maximizer $u$ for $S$ satisfies
\begin{equation*}
\int_B |\nabla u|^{p-2}\nabla u \cdot \nabla v + \int_B |u|^{p-2}u v = \lambda \int_B a(|x|) f(u)v\, dx
\end{equation*}
for any radial function $v \in W^{1,p}(B)$.
\end{proposition}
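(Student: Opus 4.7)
The plan is to upgrade the one-sided inequality of Proposition~\ref{prop:10} to a two-sided identity for every radial $v\in W^{1,p}(B)$. The key observation is that whenever $u\pm\varepsilon v\in\mathcal{M}$ for all small $\varepsilon>0$, applying Proposition~\ref{prop:10} to $v$ and to $-v$ forces equality in~(\ref{supsol}). So the task reduces to approximating an arbitrary radial $v$ by such \emph{two-sided admissible} test functions, using the two reserves of room we have accumulated: the strict positivity $u\geq c_0>0$ on $\overline{B}$ supplied by Lemma~\ref{lem:pos}, and the a.e.\ strict monotonicity $u'(r)>0$ supplied by~(\ref{8}).

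Concretely, I would split $v=v(0)+\tilde v$ with $\tilde v(0)=0$ and treat the constant $v(0)$ via the trivially admissible perturbations $\pm 1$. For the nontrivial part $\tilde v$, and for $n,M\in\mathbb{N}$, set
\[
\tilde v_{n,M}(r)=\int_0^r \mathbf{1}_{\{u'(s)\geq 1/n\}}\,\tau_M\bigl(\tilde v'(s)\bigr)\,ds,\qquad \tau_M(t)=\max\{-M,\min\{t,M\}\}.
\]
Then $\tilde v_{n,M}'$ vanishes where $u'<1/n$ and is bounded by $M$ elsewhere, so $(u\pm\varepsilon\tilde v_{n,M})'(r)\geq u'(r)-\varepsilon M\,\mathbf{1}_{\{u'\geq 1/n\}}\geq 0$ whenever $\varepsilon\leq 1/(nM)$, and positivity $u\pm\varepsilon\tilde v_{n,M}\geq 0$ is automatic for $\varepsilon$ small because $\tilde v_{n,M}$ is uniformly bounded. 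Thus $\pm\tilde v_{n,M}$ are two-sided admissible, and Proposition~\ref{prop:10} yields
\[
\int_B |\nabla u|^{p-2}\nabla u\cdot\nabla \tilde v_{n,M}+|u|^{p-2}u\,\tilde v_{n,M}\,dx=\lambda\int_B a(|x|)f(u)\,\tilde v_{n,M}\,dx.
\]

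It remains to let $M\to\infty$ and then $n\to\infty$. Since $u'>0$ a.e.\ by~(\ref{8}), $\mathbf{1}_{\{u'\geq 1/n\}}\tau_M(\tilde v')\to\tilde v'$ a.e., and the pointwise bound by $|\tilde v'|$ combined with dominated convergence (in the weighted $L^p$ of radial derivatives) gives $\tilde v_{n,M}\to\tilde v$ in $W^{1,p}(B)$, as both vanish at the origin. Each term of the identity is continuous under $W^{1,p}$ convergence of the test function, using $|\nabla u|^{p-2}\nabla u,\,|u|^{p-2}u\in L^{p/(p-1)}(B)$ on the left and $a(|x|)f(u)\in L^1(B)$ together with the uniform $L^\infty$ bound on $\tilde v_{n,M}$ on the right. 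Adding $v(0)$ times the constant-test-function identity yields the claim for $v$. The delicate point is the cutoff construction, where the a.e.\ strict monotonicity $u'(r)>0$ is indispensable: without it the class of two-sided admissible perturbations could be too thin to approximate a general radial $\tilde v\in W^{1,p}(B)$.
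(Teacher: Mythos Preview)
Your strategy is essentially the paper's: cut off the test function's derivative on the set $\{u'\geq 1/n\}$, use strict positivity (Lemma~\ref{lem:pos}) and strict monotonicity (\ref{8}) to verify admissibility, and pass to the limit by dominated convergence. Two differences are worth noting.

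First, the paper observes that the desired identity is odd in $v$, so it suffices to prove the inequality $\geq$ for every radial $v$; one-sided admissibility $u+\varepsilon v_k\in\mathcal{M}$ is then enough, and there is no need to check $u-\varepsilon v_k\in\mathcal{M}$ separately. Your two-sided argument is fine, just slightly more work.

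Second, and more substantively, your splitting $v=v(0)+\tilde v$ and the formula $\tilde v_{n,M}(r)=\int_0^r\cdots$ tacitly assume that a radial $v\in W^{1,p}(B)$ is continuous at the origin. This is false in general when $p\leq N$ (e.g.\ $v(r)=r^{-\alpha}$ with $0<\alpha<(N-p)/p$ lies in $W^{1,p}(B)$), so $v(0)$ need not exist and $\tilde v'$ need not be integrable near $0$. The paper sidesteps this by first reducing, by density, to $v\in C^1$; once that is done your truncation $\tau_M$ becomes redundant and your $\tilde v_n$ coincides with the paper's $v_k(r)=v(0)+\int_0^r v'(s)\chi_{\Omega_k}(s)\,ds$. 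With that density step inserted, your argument goes through.
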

\begin{proof}
We notice that it is enough to prove that
\begin{equation*}
\int_B |\nabla u|^{p-2}\nabla u \cdot \nabla v + \int_B |u|^{p-2}u v \geq \lambda \int_B a(|x|) f(u)v\, dx
\end{equation*}
for any radial function $v \in W^{1,p}(B)$, because this inequality is odd in $v$. Moreover, by density, we can assume that $v$ is (radial and) of class $C^1$.

Then we can introduce the sets
\[
\Omega_k = \left\{ r \in (0,1) \mid \text{$u'(r)$ exists and $u'(r)>1/k$} \right\}, \quad k=1,2,\ldots
\]
It is easy to check that the Lebesgue measure of $[0,1] \setminus \bigcup_k \Omega_k$ is zero by the previous Lemma. Denote, as usual, by $\chi_{\Omega_k}$ the characteristic function of $\Omega_k$. For any radial $v \in C^1(0,1)$, we define
\[
v_k(r)=v(0)+\int_0^r v'(s) \chi_{\Omega_k}(s)\, ds,
\]
so that $v_k \in W^{1,\infty}(0,1)$ and $v'_k(r)=v'(r) \chi_{\Omega_k}(r)$ for a.e. $r \in [0,1]$.

Fix $k\geq 1$ and pick a small $\varepsilon>0$. We claim that
\begin{equation} \label{12}
u+ \varepsilon v_k \in \mathcal{M} \setminus \{0\}.
\end{equation}
Indeed, for every $r \in [0,1]$,
\[
u(r)+\varepsilon v_k(r) \geq u(0)-\varepsilon \|v\|_{L^\infty} > 0
\]
provided $\varepsilon$ is small enough. Moreover, for almost every $r \in [0,1]$ we have by definition
\[
\frac{d}{dr} (u+\varepsilon v_k) = u'(r)+\varepsilon v'(r) \chi_{\Omega_k}(r).
\]
When $r \in \Omega_k$,
\begin{equation*}
\frac{d}{dr} (u+\varepsilon v_k) = u'(r)+\varepsilon v'(r) \chi_{\Omega_k}(r) = u'(r)+\varepsilon v'(r) 
> \frac{1}{k} - \varepsilon \|v'\|_{L^\infty}> 0
\end{equation*}
provided $\varepsilon$ is small enough. When $r \notin \Omega_k$, 
\[
\frac{d}{dr} (u+\varepsilon v_k) = u'(r)
\]
and we know that $u'>0$ almost everywhere. We conclude that the derivative of $u+\varepsilon v_k$ is almost everywhere strictly positive, and this implies the strict monotonicity of $u+\varepsilon v_k$. Hence by (\ref{supsol})
\begin{multline*}
\int_0^1 u'(r)^{p-2} u'(r) v_k'(r) r^{N-1}\, dr + \int_0^1 u(r)^{p-1} v_k(r) r^{N-1}\, dr \\
\geq \lambda \int_0^1 a(r)f(u(r)) v_k(r) r^{N-1}\, dr
\end{multline*}
for any $k\geq 1$. Since $|v_k(r)| \leq |v(0)|+\|v'\|_{L^\infty}$ and $\|v'_k\|_{L^\infty} \leq \|v'\|_{L^\infty}$, by the Ascoli--Arzel\`a theorem $v_k$ converges uniformly to some $v$ on $[0,1]$. Since $\Omega_k \subset \Omega_{k+1}$, $v'_k(r) \to v'(r)$ as $k \to +\infty$ for almost every $r \in (0,1)$. An application of Lebesgue's theorem on Dominated Convergence implies now
\begin{multline*}
\int_0^1 |u'(r)|^{p-2} u'(r) v'(r) r^{N-1}\, dr + \int_0^1 u(r)^{p-1} v(r) r^{N-1}\, dr \\
\geq \lambda \int_0^1 a(r)f(u(r)) v(r) r^{N-1}\, dr,
\end{multline*}
and the proof is complete.
\end{proof}

\bigskip
\begin{proof}[Proof of Theorem \ref{main}] 
Since any maximizer $u$ for $S$ satisfies
\begin{equation*}
\int_B |\nabla u|^{p-2}\nabla u \cdot \nabla v + \int_B |u|^{p-2}u v = \lambda \int_B a(|x|) f(u)v\, dx
\end{equation*}
for any radial function $v \in W^{1,p}(B)$, we conclude that $u$ is a
positive weak solution of (\ref{1}). 
\end{proof}

\begin{remark}
The same approach solves the more general problem
\[
        \begin{cases}
        -\Delta_p u + |u|^{p-2}u = \lambda f(|x|,u) &\text{in $B$} \\
        \frac{\partial u}{\partial \nu} = 0 &\text{on $\partial B$}
\end{cases}
        \]
under reasonable assumptions on $f \colon [0,+\infty) \times [0,+\infty) \to \mathbb{R}$. For instance, one has to require that $f$ is continuous, $f(0,0)=0$, $f(r,s)$ is separately monotone in $r$ and in $s$, and that $s \mapsto f(r,s)/s^{p-1}$ is strictly increasing for every fixed $r$.
\end{remark}

\section{Solutions of (\ref{1}) when $\lambda=1$}

As we wrote at the beginning, it is in general impossible to use the weak solution $u$ of the previous section to construct a solution of the problem
\begin{equation} \label{13}
\begin{cases}
-\Delta_p u + |u|^{p-2}u = a(|x|)f(u)  &\text{in $B$}\\
u>0 &\text{in $B$}\\
\frac{\partial u}{\partial \nu} =0 &\text{on $\partial B$}.
\end{cases}
\end{equation}
However, following \cite{St}, a radially symmetric increasing solution to (\ref{13}) can be found under slightly more restrictive conditions on $f$. Besides (A), we assume
\begin{description}
\item[(F')] $f \in C^1([0,+\infty))$, $f(t)=o(t^{p-1})$ as $t \to 0$; $f'(t)t-(p-1)f(t)>0$ for all $t>0$; there exists $\gamma>p$ such that $f(t)t \geq \gamma F(t)$ for all $t>0$.
\item[(F'')] $t \mapsto f(t)/t^{p-1}$ is strictly increasing on $(0,+\infty)$.
\end{description}
\begin{remark}
It follows from (F') that
\[
f(t)t \geq C t^\gamma \quad\text{and}\quad F(t)\geq C t^\gamma \quad\text{for all $t>0$}.
\]
\end{remark}
\begin{remark}
Once more, we could treat the more general equation $-\Delta_pu + |u|^{p-2}u = f(|x|,u)$ under suitable assumptions on the right--hand side.
\end{remark}
Our main theorem about problem (\ref{13}) is the following.
\begin{theorem}\label{th:main2}
Retain assumptions (A), (F') and (F''). Then there exists a positive, radially symmetric and increasing solution to problem (\ref{13}).
\end{theorem}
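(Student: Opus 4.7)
The plan is to follow \cite{St} by combining the cone machinery of Section \ref{sec:3} with a Nehari-manifold reduction. Define
\[
J(u)=\tfrac{1}{p}\|u\|^p-\int_B a(|x|)F(u)\, dx,
\]
which is well defined on $\mathcal{M}$ by Proposition \ref{prop:infinity}, and introduce the constrained Nehari set
\[
\mathcal{N}=\Bigl\{u\in\mathcal{M}\setminus\{0\}\ :\ \|u\|^p=\int_B a(|x|)f(u)u\, dx\Bigr\}.
\]
Assumption (F'') guarantees that for every $u\in\mathcal{M}\setminus\{0\}$ the function $t\mapsto J(tu)$ on $[0,+\infty)$ has a unique positive maximizer $t=t(u)$, and $t(u)u\in\mathcal{N}$. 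The super-$p$-linearity in (F') gives the uniform lower bound
\[
J(u)=J(u)-\tfrac{1}{\gamma}\langle J'(u),u\rangle\geq\Bigl(\tfrac{1}{p}-\tfrac{1}{\gamma}\Bigr)\|u\|^p\quad\text{on }\mathcal{N},
\]
hence $c:=\inf_\mathcal{N}J\geq 0$. Moreover, the Nehari identity, $f(t)=o(t^{p-1})$ as $t\to 0^+$, and the estimate $\|u\|_{L^\infty}\leq C_\mathcal{M}\|u\|$ of Proposition \ref{prop:infinity} together imply $\|u\|\geq c_0>0$ uniformly on $\mathcal{N}$.

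Pick a minimizing sequence $\{u_n\}\subset\mathcal{N}$. It is bounded in $W^{1,p}(B)$ and uniformly bounded in $L^\infty(B)$, so a subsequence converges weakly in $W^{1,p}(B)$ and, being made of monotone functions of $r=|x|$, pointwise a.e.\ on $[0,1]$ to some $u_0\in\mathcal{M}$. Dominated convergence (with the uniform $L^\infty$ bound as dominating constant) gives $\int af(u_n)u_n\, dx\to\int af(u_0)u_0\, dx$; were $u_0\equiv 0$, the right-hand side would vanish, contradicting $\int af(u_n)u_n\, dx=\|u_n\|^p\geq c_0^p>0$. Thus $u_0\neq 0$. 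Setting $t_0=t(u_0)$, weak lower semicontinuity of the norm together with the inequality $J(tu_n)\leq J(u_n)$ (valid for every $t\geq 0$ since $u_n\in\mathcal{N}$) yield
\[
J(t_0u_0)\leq\liminf_n J(t_0u_n)\leq\liminf_n J(u_n)=c,
\]
so $u:=t_0u_0\in\mathcal{N}$ realizes $c$.

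It remains to prove that $u$ solves (\ref{13}) weakly, and here one follows Section \ref{sec:3} almost verbatim. For any radial $v\in W^{1,p}(B)$ with $u+\varepsilon v\in\mathcal{M}$ for $\varepsilon>0$ small, the condition $f'(t)t-(p-1)f(t)>0$ of (F') makes $\varepsilon\mapsto t(u+\varepsilon v)$ right-differentiable at $0$; then $G(\varepsilon):=J\bigl(t(u+\varepsilon v)(u+\varepsilon v)\bigr)\geq c=G(0)$, and differentiating at $0^+$ while using the Nehari identity to kill the $t'(0^+)$ contribution produces the one-sided Euler--Lagrange inequality
\[
\int_B|\nabla u|^{p-2}\nabla u\cdot\nabla v+|u|^{p-2}uv\, dx\geq\int_B a(|x|)f(u)v\, dx,
\]
which is the exact analogue of (\ref{supsol}) with $\lambda=1$. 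The maximum-principle argument of Lemma \ref{lem:pos} then gives $u>0$ on $\overline{B}$; the strict-monotonicity argument leading to (\ref{8}), which exploits the non-constancy of $a$ together with (F''), gives $u'(r)>0$ for a.e.\ $r\in(0,1)$; and the cut-off construction $v_k$ upgrades the inequality to an equality on every radial $v\in W^{1,p}(B)$. The main obstacle is the non-triviality of the weak limit $u_0$: lacking any subcritical Sobolev embedding, one cannot use the standard compactness of superposition operators, and the proof hinges on the $L^\infty$ bound on $\mathcal{M}$ combined with the pointwise convergence that is automatic for bounded monotone sequences.
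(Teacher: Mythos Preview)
Your proposal is correct and follows essentially the same route as the paper: define the Nehari-type set $\mathcal{N}\subset\mathcal{M}$, show $c=\inf_{\mathcal{N}}J$ is positive and attained, derive the one-sided Euler--Lagrange inequality via the implicit function theorem (the non-degeneracy condition being exactly $f'(t)t-(p-1)f(t)>0$), and then replay the machinery of Section~\ref{sec:3}.

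The only noteworthy difference is in the attainment step. The paper passes to the limit in the reduced functional $u\mapsto\tfrac{1}{p}\int af(u)u-\int aF(u)$ by dominated convergence, and then rules out the case $\|u\|^p<\int af(u)u$ by a contradiction based on the strict monotonicity of $t\mapsto\tfrac{1}{p}f(t)t-F(t)$. You instead use the standard Nehari projection trick
\[
J(t_0u_0)\leq\liminf_n J(t_0u_n)\leq\liminf_n J(u_n)=c,
\]
which is slightly slicker and avoids the case distinction; both arguments are correct and rely on the same ingredients (uniform $L^\infty$ bound on $\mathcal{M}$, dominated convergence, weak lower semicontinuity of the norm). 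A small remark: pointwise a.e.\ convergence of a subsequence already follows from the compact embedding $W^{1,p}(B)\hookrightarrow L^p(B)$ and does not require monotonicity, so your appeal to ``bounded monotone sequences'' is not really needed there.
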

We define the functional $J \colon W^{1,p}(B) \to \mathbb{R}$ by the formula
\begin{equation}
J (u) = \frac{1}{p}\|u\|^p - \int_B a(|x|) F(u).
\end{equation}
We would like to find a critical point of $J$ lying in $\mathcal{M}$, but this functional is unbounded. Therefore we introduce the set
\begin{equation}
\mathcal{N} = \left\{ u \in \mathcal{M}\setminus\{0\} \mid \|u\|^p = \int_B a(|x|)f(u)u \right\}. 
\end{equation}
Clearly, this set is an imitation of the well-known \emph{Nehari manifold} in Critical Point Theory. More precisely, it is the intersection of the standard Nehari manifold with $\mathcal{M}$, and therefore it is not homeomorphic to the unit sphere of $W^{1,p}(B)$.
\begin{lemma} \label{lem:18}
The constraint $\mathcal{N}$ is radially homeomorphic to the ball $\{u \in \mathcal{M} \mid \|u\|=1\}$.
\end{lemma}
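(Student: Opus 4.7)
The strategy is the usual identification of a Nehari-type constraint with the unit sphere of the ambient cone: I would show that every open ray in $\mathcal{M}$ meets $\mathcal{N}$ in exactly one point and that this point depends continuously on the direction of the ray. Set $\mathbb{S}:=\{u\in\mathcal{M}\mid\|u\|=1\}$. For each $u\in\mathbb{S}$, put
\[
H_u(t):=\int_B a(|x|)\,\frac{f(tu(x))}{(tu(x))^{p-1}}\,u(x)^p\,dx,\qquad t>0,
\]
with the integrand interpreted as $0$ wherever $u(x)=0$ (a convention made consistent by $f(s)=o(s^{p-1})$ in (F')). A direct calculation shows that $tu\in\mathcal{N}$ if and only if $H_u(t)=1$. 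The homeomorphism will then be $T(u):=t(u)\,u$, where $t(u)$ is the unique solution to this equation; its inverse will be the radial projection $\pi(v):=v/\|v\|$.

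The first step is to establish existence and uniqueness of $t(u)$. The function $H_u$ is continuous and strictly increasing in $t$: the strict monotonicity of $s\mapsto f(s)/s^{p-1}$ granted by (F'') transfers pointwise to the integrand on $\{u>0\}$, which has positive measure because $\|u\|=1$. As $t\to 0^+$, the condition $f(s)=o(s^{p-1})$ from (F') combined with Proposition \ref{prop:infinity} and dominated convergence gives $H_u(t)\to 0$; as $t\to+\infty$, the bound $f(s)s\ge\gamma F(s)\ge Cs^\gamma$ from (F') yields $f(s)/s^{p-1}\ge Cs^{\gamma-p}$ with $\gamma>p$, and Fatou's lemma gives $H_u(t)\to+\infty$. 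Thus there is a unique $t(u)\in(0,+\infty)$ with $H_u(t(u))=1$.

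The second step is continuity of $T$. If $u_n\to u$ in $\mathbb{S}$, Proposition \ref{prop:infinity} supplies uniform convergence on $\overline{B}$. Writing $t_n:=t(u_n)$, the lower estimate $H_v(t)\ge C t^{\gamma-p}\int_B a(|x|) v^\gamma\,dx$, together with the uniform convergence $u_n^\gamma\to u^\gamma$ and the strict positivity of $\int_B a(|x|) u^\gamma\,dx$, bounds $\{t_n\}$ from above; the bound from below follows because $H_{u_n}(t_n)\to 0$ if some subsequence $t_n\to 0$ (again by (F') and Proposition \ref{prop:infinity}), contradicting $H_{u_n}(t_n)=1$. Hence $\{t_n\}$ lies in a compact subinterval of $(0,+\infty)$; any cluster point $t^\ast$ satisfies $H_u(t^\ast)=1$ by dominated convergence, so $t^\ast=t(u)$ by uniqueness, and the full sequence converges. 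Since $\pi$ is obviously continuous and the relations $\pi\circ T=\mathrm{id}_{\mathbb{S}}$ and $T\circ\pi=\mathrm{id}_{\mathcal{N}}$ are built into the construction and the uniqueness, $T$ is the required homeomorphism.

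The main obstacle is the uniform control of $t(u_n)$ along the sequence: this is exactly where the superlinearity $\gamma>p$ in (F') and the $L^\infty$ bound of Proposition \ref{prop:infinity} are indispensable. All remaining verifications are straightforward consequences of (F'), (F''), and the cone structure of $\mathcal{M}$.
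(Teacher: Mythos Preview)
Your proposal is correct and follows essentially the same route as the paper. Both arguments show that each ray in $\mathcal{M}\setminus\{0\}$ meets $\mathcal{N}$ exactly once---using $f(s)=o(s^{p-1})$ near zero, the superlinear growth $f(s)s\ge Cs^\gamma$ at infinity, and the strict monotonicity of $s\mapsto f(s)/s^{p-1}$ for uniqueness---and then verify continuity of the scaling factor by a boundedness-plus-subsequence argument; your equation $H_u(t)=1$ is simply the paper's equation $\sigma(t)=0$ divided by $t^p$, and your explicit use of the $L^\infty$ control from Proposition~\ref{prop:infinity} is a minor (and arguably cleaner) variant of the paper's estimates.
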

\begin{proof}
We claim that for every $u \in \mathcal{M}\setminus\{0\}$ there exists a unique~$t>0$ such that $tu \in \mathcal{N}$. 
Indeed, we need to find $t>0$ that solves
\begin{equation}
t^p \|u\|^p = \int_B a(|x|)f(tu)tu\, dx.
\end{equation}
Define the auxiliary function $\sigma \colon [0,+\infty) \to \mathbb{R}$ by the formula 
\begin{equation}\label{eq:sigma}
\sigma(t)=t^p\|u\|^p-\int_B a(|x|)f(tu)tu\, dx.
\end{equation}
Since $f(t)=o(t^{p-1})$ as $t \to 0$ and $u \in L^\infty(B)$, for every $\varepsilon>0$ we can choose $\delta>0$ with the property that $|tu(x)| \leq \delta$ implies $0 \leq f(tu(x)) < \varepsilon |tu(x)|^{p-1}$. If $0 \leq t \leq \delta \|u\|_{L^\infty}^{-1}$, then
\begin{equation*}
0 \leq \int_B a(|x|)f(tu)tu\, dx \leq \varepsilon t^{p} \int_B a(|x|)u^{p}\, dx \leq \varepsilon t^{p} \|u\|_{L^\infty}^{p} \|a\|_{L^1} = C \varepsilon t^{p}.
\end{equation*}
Therefore $\sigma(t)\geq t^p\|u\|^p-C\varepsilon t^p>0$, provided $t$ and $\varepsilon$ are small enough. Recalling the properties of $f$,
\[
\int_B a(|x|)f(tu)tu\, dx \geq C t^\gamma \int_B a(|x|)u^\gamma\, dx,
\]
and then
\[
\limsup_{t \to +\infty} \sigma(t) \leq \limsup_{t \to +\infty} \left( t^p\|u\|^p - C t^\gamma \int_B a(|x|)u^\gamma \, dx \right)= -\infty.
\]
By the Intermediate Value Theorem, the continuous function $\sigma$ must vanish at some $t_0>0$, and this means that $t_0 u \in \mathcal{N}$. Since the map $t \mapsto f(t)/t^{p-1}$ is strictly increasing, this $t_0=t_0(u)$ is unique. 

Next, we claim that the mapping $u \mapsto t_0(u)$ is continuous. We proceed as in \cite{rab}. Assume $\{u_n\}_{n=1}^\infty$ is a sequence in $\mathcal{N}$ such that $u_n \to u$ strongly. In particular, $u \neq 0$. Then
\begin{equation} \label{eq:18}
t_0(u_n)^p \|u_n\|^p = \int_B a(|x|) f(t_0(u_n)u_n) t_0(u_n)u_n\, dx.
\end{equation}
Either $t_0(u_n) \leq 1$ or $t_0(u_n)>1$. In the latter case, we deduce that
\begin{eqnarray*}
\int_B a(|x|) f(t_0(u_n)u_n) t_0(u_n)u_n\, dx &\geq& \gamma \int_B a(|x|) F(t_0(u_n)u_n)\, dx \\
&\geq& \gamma t_0(u_n)^\gamma \int_B a(|x|) F(u_n)\, dx.
\end{eqnarray*}
Consequently,
\[
t_0(u_n)^{\gamma -p} \leq \frac{1}{\gamma} \frac{\|u_n\|^p}{\int_B a(|x|) F(u_n)\, dx} = \frac{1}{\gamma} \frac{\|u\|^p}{\int_B a(|x|) F(u)\, dx} + o(1).
\]
It follows that $\{t_0(u_n)\}_{n=1}^\infty$ is bounded from above and converges along a subsequence to a limit $t_\infty$. If $t_\infty=0$, from (\ref{eq:18}) and from the properties of $f$ we deduce that $u=0$. But this is impossibile since $u \in \mathcal{N}$. Thus $t_\infty>0$ and again from (\ref{eq:18}) we get
\[
t_\infty^{p} \|u\|^p = \int_B a(|x|) f(t_\infty u)t_\infty u\, dx. 
\]
By uniqueness, $t_\infty = t_0(u)$. By a standard argument, $t_0(u_n) \to t_0(u)$ along the whole sequence, and our claim is proved.
\end{proof}
We introduce the quantity
\begin{equation}
c_0 = \inf_{u \in \mathcal{N}} J(u).
\end{equation}
In the sequel, it may be useful to notice that
\begin{equation}
J_{|\mathcal{N}}(u) = \frac{1}{p} \int_B a(|x|) f(u)u\, dx - \int_B a(|x|)F(u)\, dx.
\end{equation}

\begin{proposition}
The level $c_0$ is attained, \ie\ there exists $u \in \mathcal{N}$ such that $J(u)=c_0$.
\end{proposition}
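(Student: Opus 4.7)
The plan is to carry out a standard Nehari-minimization argument, taking care to exploit the pointwise monotonicity structure of the integrand on $\mathcal{N}$.

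First, I would rewrite $J$ restricted to $\mathcal{N}$ in the "integrand-only" form already displayed in the paper, namely $J(u)=\int_B a(|x|)H(u)\,dx$ with $H(t)=\tfrac{1}{p}f(t)t-F(t)$. Using the Ambrosetti--Rabinowitz-type condition $f(t)t\ge\gamma F(t)$ from (F'), one gets $J(u)\ge\bigl(\tfrac{1}{p}-\tfrac{1}{\gamma}\bigr)\|u\|^p$ for every $u\in\mathcal{N}$; in particular $c_0\ge 0$ and any minimizing sequence $\{u_n\}\subset\mathcal{N}$ is bounded in $W^{1,p}(B)$. Extract a subsequence with $u_n\rightharpoonup u$ in $W^{1,p}(B)$ and $u_n\to u$ a.e.; since $\mathcal{M}$ is convex and strongly closed, it is weakly closed, so $u\in\mathcal{M}$. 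Proposition~\ref{prop:infinity} gives a uniform $L^\infty$ bound on the $u_n$, which lets us apply Lebesgue dominated convergence to pass to the limit in $\int_B a(|x|) f(u_n)u_n$, $\int_B a(|x|)F(u_n)$, and $\int_B a(|x|)H(u_n)$.

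The next step is to rule out $u\equiv 0$. Using $f(t)=o(t^{p-1})$ together with Proposition~\ref{prop:infinity}, the identity $\|u\|^p=\int_B a(|x|)f(u)u\,dx$ forces a positive lower bound on $\|u\|$ for every $u\in\mathcal{N}$; if instead $u\equiv 0$, dominated convergence would give $\int_B a(|x|)f(u_n)u_n\to 0$ and hence $\|u_n\|\to 0$, a contradiction. With $u\neq 0$ in hand, Lemma~\ref{lem:18} supplies a unique $t_0=t_0(u)>0$ with $t_0u\in\mathcal{N}$. Weak lower semicontinuity of the norm, combined with the limit $\int_B a(|x|)f(u_n)u_n\to\int_B a(|x|)f(u)u$, yields
\[
\|u\|^p\le\liminf_n\|u_n\|^p=\int_B a(|x|)f(u)u\,dx,
\]
which is exactly $\sigma(1)\le 0$ for the function $\sigma$ defined in~\eqref{eq:sigma}; the uniqueness/monotonicity analysis of $\sigma$ from Lemma~\ref{lem:18} then gives $t_0\le 1$.

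The main obstacle, and the place where condition (F') does essential work, is to conclude $t_0=1$. Differentiating (F') shows that $H'(t)=\tfrac{1}{p}\bigl(f'(t)t-(p-1)f(t)\bigr)>0$, so $H$ is strictly increasing on $(0,+\infty)$. Since $t_0\le 1$ and $u\ge 0$, we have $H(t_0u)\le H(u)$ pointwise, and because $a(|x|)>0$ a.e.,
\[
J(t_0u)=\int_B a(|x|)H(t_0u)\,dx\le\int_B a(|x|)H(u)\,dx=\lim_n\int_B a(|x|)H(u_n)\,dx=c_0.
\]
On the other hand $t_0u\in\mathcal{N}$ forces $J(t_0u)\ge c_0$, so equality holds throughout; the strict monotonicity of $H$ and positivity of $a$ then imply $H(t_0u)=H(u)$ a.e.\ on $\{u>0\}$, hence $t_0u=u$ a.e., and since $u\not\equiv 0$ we obtain $t_0=1$. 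Therefore $u\in\mathcal{N}$ and $J(u)=c_0$, completing the proof.
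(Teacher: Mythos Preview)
Your argument is correct and follows essentially the same route as the paper's proof: the coercivity bound $J(u)\ge(\tfrac{1}{p}-\tfrac{1}{\gamma})\|u\|^p$, the lower bound on $\|u\|$ over $\mathcal{N}$ coming from $f(t)=o(t^{p-1})$, dominated convergence via the uniform $L^\infty$ bound, and the strict monotonicity of $H(t)=\tfrac{1}{p}f(t)t-F(t)$ to pin down the scaling parameter. The only cosmetic difference is that the paper splits into the two cases $u\in\mathcal{N}$ versus $u\notin\mathcal{N}$ and derives the contradiction $c_0<c_0$ from a strict inequality $t_0<1$, whereas you work directly with $t_0\le 1$ and squeeze $J(t_0u)=c_0$ to force $t_0=1$; the underlying mechanism is identical.
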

\begin{proof}
We claim that $c_0>0$. Indeed, from the properties of $f$ and $F$,
\begin{eqnarray} \label{19}
J(u) &=& \frac{1}{p} \|u\|^p - \int_B a(|x|)F(u)\, dx \nonumber\\
&=& \frac{1}{p} \|u\|^p - \frac{1}{\gamma} \|u\|^p + \frac{1}{\gamma} \|u\|^p - \int_B a(|x|)F(u)\, dx \nonumber\\
&=& \left( \frac{1}{p} - \frac{1}{\gamma}\right) \|u\|^p + \frac{1}{\gamma} \int_B a(|x|) \left( f(u)u-\gamma F(u)\right)\, dx \nonumber\\
&\geq& \left( \frac{1}{p} - \frac{1}{\gamma}\right) \|u\|^p.
\end{eqnarray}
We need to show that $\inf_{u \in \mathcal{N}} \|u\|^p>0$. Assume the existence of a sequence $\{u_n\}_{n=1}^\infty \subset \mathcal{N}$ such that $u_n \to 0$ strongly. Then
\begin{eqnarray*}
\|u_n\|^p &=& \int_B a(|x|)f(u_n)u_n\, dx \leq f(\|u_n\|_{L^\infty}) \|u_n\|_{L^\infty} \|a\|_{L^1} \\
&\leq& f(C_{\mathcal{M}}\|u_n\|) C_{\mathcal{M}} \|u_n\| \|a\|_{L^1},
\end{eqnarray*}
where $C_{\mathcal{M}}>0$ is the constant of Proposition \ref{prop:infinity}. But $f(t)=o(t^{p-1})$ as $t \to 0$, and this leads to a contradiction. Our claim is proved.

Now, let $\{u_n\}_{n=1}^\infty \subset \mathcal{N}$ be a minimizing sequence for $c_0$. From (\ref{19}) it follows that $\{u_n\}_{n=1}^\infty$ is bounded in $W^{1,p}(B)$, and we can assume without loss of generality that $u_n \to u$ pointwise almost everywhere and weakly in $W^{1,p}(B)$. It is easy to check that $u \in \mathcal{M}$, and $\sup_n \|u_n\|_{L^\infty} < \infty$. By Lebesgue's theorem on Dominated Convergence,
\begin{eqnarray}
c_0+o(1) &=& J(u_n) = \frac{1}{p}\int_B a(|x|)f(u_n)u_n \, dx - \int_B a(|x|)F(u_n)\, dx \nonumber \\
&=& \frac{1}{p}\int_B a(|x|)f(u)u \, dx - \int_B a(|x|)F(u)\, dx + o(1).
\end{eqnarray}
We deduce that $u \neq 0$, since $c_0>0$. We have two cases:
\begin{enumerate}
\item $u \in \mathcal{N}$. This implies that $\int_B a(|x|)f(u)u\, dx = \|u\|^p$, and the proof is complete.
\item $u \notin \mathcal{N}$. By the lower semicontinuity of the norm, we have 
\begin{equation}\label{21}
\|u\|^p < \int_B a(|x|)f(u)u\, dx.
\end{equation}
Consider again the auxiliary function $\sigma$ defined in~(\ref{eq:sigma}). Since $u \neq 0$, $\sigma(t)>0$ for $t$ positive and small. By (\ref{21}), $\sigma(1)<0$, and hence $\sigma$ vanishes at some $t_0 \in (0,1)$. We know that $t_0 u \in \mathcal{N}$, and the properties of $f$ and $F$ imply that $t \mapsto (1/p)f(t)t-F(t)$ is strictly increasing: just differentiate and use (F'). Since $t_0<1$, we deduce
\begin{eqnarray*}
c_0 &\leq& J(t_0u) = \int_B a(|x|) \left(  \frac{1}{p} f(t_0 u)t_0 u - F(t_0 u)\right)\, dx \\
&<& \int_B a(|x|) \left(  \frac{1}{p} f(u)u - F(u)\right)\, dx = c_0.
\end{eqnarray*}
This contradiction proves that (\ref{21}) cannot hold.
\end{enumerate}
Since we have shown that $u \in \mathcal{N}$, the proof is complete.
\end{proof}

We are now ready to apply all the arguments of Section \ref{sec:3} to our minimizer $u \in \mathcal{N}$. The only difference is that we need an analog of Proposition \ref{prop:10}. 

\begin{proposition}
Let $u$ be a minimizer of $J$ on $\mathcal{N}$. Assume that $v \in W^{1,p}(B)$ is a radial function such that $u+s v \in \mathcal{M}$ for every positive small $s$. Then
\begin{equation}\label{22}
\int_B |\nabla u|^{p-2} \nabla u \cdot \nabla v + \int_B |u|^{p-2}uv \geq \int_B a(|x|) f(u)uv.
\end{equation}
\end{proposition}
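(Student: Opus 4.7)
The plan is to adapt the proof of Proposition \ref{prop:10} by replacing the ad hoc normalization $u/\|u+\varepsilon v\|^p$ with the projection map $t_0$ supplied by Lemma \ref{lem:18}. For $s \geq 0$ small, set $w_s = u + s v \in \mathcal{M}\setminus\{0\}$, let $\tau(s) = t_0(w_s)$, so that $\tau(s) w_s \in \mathcal{N}$, and note that $\tau(0)=1$ by the uniqueness part of Lemma \ref{lem:18}. Introduce the auxiliary function $H(s) = J(\tau(s) w_s)$. Because $u$ minimizes $J$ on $\mathcal{N}$ and $\tau(s) w_s \in \mathcal{N}$ for all admissible $s$, we have $H(s) \geq H(0)$ on a right neighbourhood of $0$, hence $H'(0^+) \geq 0$.

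Next I would compute $H'(0^+)$ via the chain rule, obtaining
\[
H'(0^+) \;=\; \tau'(0^+)\, \langle J'(u), u\rangle \;+\; \langle J'(u), v\rangle.
\]
The first term vanishes: by construction $u \in \mathcal{N}$ is precisely the identity $\langle J'(u), u\rangle = \|u\|^p - \int_B a(|x|) f(u) u\, dx = 0$. Hence $H'(0^+) \geq 0$ reduces to $\langle J'(u), v\rangle \geq 0$, which unwinds to
\[
\int_B |\nabla u|^{p-2}\nabla u \cdot \nabla v + \int_B |u|^{p-2} u v \;\geq\; \int_B a(|x|) f(u) v\, dx,
\]
i.e.\ inequality (\ref{22}). (I read the trailing factor $u$ on the right-hand side of (\ref{22}) as a typographical slip; the argument naturally produces $\int_B a(|x|) f(u) v$.)

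The main obstacle is justifying the right-differentiability of $\tau$ at $s=0$. The function $\tau$ is defined implicitly by the analogue of (\ref{eq:sigma}) with $u$ replaced by $w_s$, namely $\Phi(s,\tau) := \tau^p\|w_s\|^p - \int_B a(|x|) f(\tau w_s) \tau w_s\, dx = 0$. The partial derivative $\partial_\tau \Phi$ at $(s,\tau) = (0,1)$ equals
\[
p\|u\|^p - \int_B a(|x|)\bigl[f'(u) u^2 + f(u) u\bigr] dx \;=\; (p-1)\int_B a(|x|) f(u) u\, dx - \int_B a(|x|) f'(u) u^2\, dx,
\]
after using $\|u\|^p = \int_B a(|x|) f(u) u\, dx$. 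By hypothesis (F'), the quantity $f'(t) t - (p-1) f(t)$ is strictly positive for $t>0$, so this derivative is strictly negative. The implicit function theorem (restricted to $s \geq 0$) then yields the one-sided $C^1$ regularity of $\tau$ at the origin, and the chain rule step is legitimate. Actually, since $\tau'(0^+)$ is multiplied by $\langle J'(u), u\rangle = 0$ in the expression for $H'(0^+)$, only boundedness of $\tau'(0^+)$ is needed, and this weaker information can also be obtained by a direct comparison argument exploiting strict monotonicity of $\Phi(0,\cdot)$ near $\tau=1$, avoiding any appeal to a full implicit function theorem.
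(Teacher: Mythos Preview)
Your proof is correct and essentially identical to the paper's: both define $\Phi(s,t)=t^{p}\|u+sv\|^{p}-\int_B a(|x|)f(t(u+sv))t(u+sv)\,dx$, apply the implicit function theorem using the same computation $\partial_t\Phi(0,1)=\int_B a(|x|)\bigl[(p-1)f(u)-f'(u)u\bigr]u\,dx<0$ from (F'), and then differentiate $H(s)=J(t(s)(u+sv))$ at $s=0$, where the $t'(0)$ term drops out because $u\in\mathcal{N}$. Your reading of the trailing $u$ on the right-hand side of (\ref{22}) as a typo is correct; the paper's own computation also yields $\int_B a(|x|)f(u)v$.
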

\begin{proof}
Without loss of generality, we assume that $u+s v$ does not vanish identically. By Lemma \ref{lem:18}, to every small $s\geq 0$ we can attach some $t=t(s)$ with $t(s)(u+sv) \in \mathcal{N}$. Define $G(s,t)=t^p \|u+sv\|^p - \int_B a(|x|) f(t(u+sv))t(u+sv)\, dx$. By definition, $G(0,1)=0$ since $u \in \mathcal{N}$. In addition,
\begin{eqnarray*}
\frac{\partial G}{\partial t}(0,1) &=& p \|u\|^p - \int_B a(|x|) f'(u)u^2 \, dx - \int_B a(|x|) f(u)u\, dx \\
&=& (p-1)\int_B a(|x|) f(u)u\, dx - \int_B a(|x|) f'(u)u^2 \, dx \\
&=& \int_B a(|x|) \left[ (p-1)f(u) - f'(u)u\right]u \, dx <0.
\end{eqnarray*}
The Implicit Function Theorem yields the existence of some $\delta>0$ and of a $C^1$ function $t \colon [0,\delta) \to \mathbb{R}$ such that $t(0)=1$ and
\[
G(s,t(s))=0\quad\text{for all $s \in [0,\delta)$}.
\]
By definition, this means that $t(s)(u+sv) \in \mathcal{N}$ for all $s \in [0,\delta)$. The function $H(s) = J(t(s)(u+sv))$, defined on $[0,\delta)$, has a local minimum point at $s=0$. Therefore
\begin{eqnarray*}
0 &\leq& H'(0) \\
&=& t'(0) \|u\|^{p} + \int_B |\nabla u|^{p-2}\nabla u \cdot \nabla v \\
&&\qquad{}+ \int_B |u|^{p-2}uv - t'(0)\int_B a(|x|)f(u)u\, dx - \int_B a(|x|) f(u)v \, dx \\
&=& t'(0) \left( \|u\|^{p} - \int_B a(|x|) f(u)u \, dx
\right) \\
&&\qquad{}+ \int_B |\nabla u|^{p-2}\nabla u \cdot \nabla v + \int_B |u|^{p-2}uv- \int_B a(|x|) f(u)v \, dx.
\end{eqnarray*}
Since $u \in \mathcal{N}$, the big bracket vanishes and (\ref{22}) is proved.
\end{proof}
\begin{proof}[Proof of Theorem \ref{th:main2}]
Since we have a function $ \in \mathcal{N}$ that satisfies (\ref{22}),
we can apply to $u$ the same arguments developed in Section
\ref{sec:3} and conclude that $u$ is a weak solution of (\ref{13}).
\end{proof}
 
\section{Final remarks}

As we have seen, the variational technique is rather elementary, and can be adapted to more general equations. In both sections, the crucial point was that the tentative solution $u$ was not \emph{a priori} surrounded by a neighborhood contained in $\mathcal{M}$, and it was not clear how to compute the Gateaux derivative at $u$.

Anyway, since our solution is a true (constrained) critical point of the associated functional, we may ask ourselves what kind of critical point it is. Its variational characterization is rather subtle, since is maximizes (or minimizes) a functional only among the set of radially increasing functions. It could be interesting to look for a more precise description of this solution.

\bigskip

\noindent
\begin{grazie}
The author is grateful to E.~Serra for suggesting this problem.
\end{grazie}


    \bibliographystyle{amsplain}
    \bibliography{Bibliography}

\providecommand{\bysame}{\leavevmode\hbox to3em{\hrulefill}\thinspace}
\providecommand{\MR}{\relax\ifhmode\unskip\space\fi MR }
\providecommand{\MRhref}[2]{%
  \href{http://www.ams.org/mathscinet-getitem?mr=#1}{#2}
}
\providecommand{\href}[2]{#2}
\begin{thebibliography}{10}

\bibitem{ay1}
Adimurthi and S.~L. Yadava, \emph{Existence and nonexistence of positive radial
  solutions of {N}eumann problems with critical {S}obolev exponents}, Arch.
  Rational Mech. Anal. \textbf{115} (1991), no.~3, 275--296. \MR{1106295
  (92e:35069)}

\bibitem{ay2}
\bysame, \emph{On a conjecture of {L}in-{N}i for a semilinear {N}eumann
  problem}, Trans. Amer. Math. Soc. \textbf{336} (1993), no.~2, 631--637.
  \MR{1156299 (93f:35073)}

\bibitem{bss}
Vivina Barutello, Simone Secchi, and Enrico Serra, \emph{A note on the radial
  solutions for the supercritical {H}{\'e}non equation}, J. Math. Anal. Appl.
  \textbf{341} (2008), no.~1, 720--728. \MR{2394119 (2010b:35158)}

\bibitem{gs}
Marita Gazzini and Enrico Serra, \emph{The {N}eumann problem for the {H}\'enon
  equation, trace inequalities and {S}teklov eigenvalues}, Ann. Inst. H.
  Poincar\'e Anal. Non Lin\'eaire \textbf{25} (2008), no.~2, 281--302.
  \MR{2396523 (2009d:35086)}

\bibitem{gnn}
B.~Gidas, Wei~Ming Ni, and L.~Nirenberg, \emph{Symmetry of positive solutions
  of nonlinear elliptic equations in {${\bf R}^{n}$}}, Mathematical analysis
  and applications, {P}art {A}, Adv. in Math. Suppl. Stud., vol.~7, Academic
  Press, New York, 1981, pp.~369--402. \MR{634248 (84a:35083)}

\bibitem{hk}
Juha Heinonen, Tero Kilpel{\"a}inen, and Olli Martio, \emph{Nonlinear potential
  theory of degenerate elliptic equations}, Dover Publications Inc., Mineola,
  NY, 2006, Unabridged republication of the 1993 original. \MR{2305115
  (2008g:31019)}

\bibitem{lnt}
C.-S. Lin, W.-M. Ni, and I.~Takagi, \emph{Large amplitude stationary solutions
  to a chemotaxis system}, J. Differential Equations \textbf{72} (1988), no.~1,
  1--27. \MR{929196 (89e:35075)}

\bibitem{l1}
Chang-Shou Lin, \emph{Locating the peaks of solutions via the maximum
  principle. {I}. {T}he {N}eumann problem}, Comm. Pure Appl. Math. \textbf{54}
  (2001), no.~9, 1065--1095. \MR{1835382 (2002d:35052)}

\bibitem{l}
Peter Lindqvist, \emph{On the equation {${\rm div}\,(\vert \nabla u\vert
  ^{p-2}\nabla u)+\lambda\vert u\vert ^{p-2}u=0$}}, Proc. Amer. Math. Soc.
  \textbf{109} (1990), no.~1, 157--164. \MR{1007505 (90h:35088)}

\bibitem{pal}
Richard~S. Palais, \emph{The principle of symmetric criticality}, Comm. Math.
  Phys. \textbf{69} (1979), no.~1, 19--30. \MR{547524 (81c:58026)}

\bibitem{rab}
Paul~H. Rabinowitz, \emph{On a class of nonlinear {S}chr{\"o}dinger equations},
  Z. Angew. Math. Phys. \textbf{43} (1992), no.~2, 270--291. \MR{1162728
  (93h:35194)}

\bibitem{St}
E.~Serra and P.~Tilli, \emph{Monotonicity constraints and supercritical
  {N}eumann problems}, preprint 2010.

\bibitem{simon}
Jacques Simon, \emph{R\'egularit\'e de la solution d'une \'equation non
  lin\'eaire dans {${\bf R}^{N}$}}, Journ\'ees d'{A}nalyse {N}on {L}in\'eaire
  ({P}roc. {C}onf., {B}esan\c con, 1977), Lecture Notes in Math., vol. 665,
  Springer, Berlin, 1978, pp.~205--227. \MR{519432 (80b:35035)}

\bibitem{v}
J.~L. V{\'a}zquez, \emph{A strong maximum principle for some quasilinear
  elliptic equations}, Appl. Math. Optim. \textbf{12} (1984), no.~3, 191--202.
  \MR{768629 (86m:35018)}

\end{thebibliography}

   \end{document}